\documentclass{sn-jnl}
\usepackage[utf8]{inputenc}

\usepackage{amssymb}
\usepackage{manyfoot}
\usepackage{xcolor}
\usepackage{url}
\usepackage{amsmath}
\usepackage{amsthm}
\usepackage{proof}
\usepackage{ebproof}
\usepackage[scr=rsfs]{mathalpha}
\usepackage[all]{xy}
\usepackage{tikz}
\usetikzlibrary{calc,shapes.misc,shapes.geometric}

\usepackage{enumitem}
\usepackage{xcolor}

\newcounter{desccount}

\newcommand{\descref}[1]{\hyperref[#1]{#1}}

\newcommand{\At}{\text{At}}

\newcommand{\myhypertarget}[2]{%
  \phantomsection
  \hypertarget{#1}{\color{magenta}{#2}}%
  \expandafter\gdef\csname targettext@#1\endcsname{#2}%
}
\newcommand{\myhyperlink}[1]{%
  \hyperlink{#1}{\csname targettext@#1\endcsname}%
}

\newcommand{\derives}[1]{\vdash_{\!#1}}

\newcommand{\base}[1]{\mathscr{#1}}

\makeatletter
\def\slashedarrowfill@#1#2#3#4#5{%
  $\m@th\thickmuskip0mu\medmuskip\thickmuskip\thinmuskip\thickmuskip
   \relax#5#1\mkern-7mu%
   \cleaders\hbox{$#5\mkern-2mu#2\mkern-2mu$}\hfill
   \mathclap{#3}\mathclap{#2}%
   \cleaders\hbox{$#5\mkern-2mu#2\mkern-2mu$}\hfill
   \mkern-7mu#4$%
}
\def\rightslashedarrowfill@{%
  \slashedarrowfill@\relbar\relbar\mapstochar\rightarrow}
\newcommand\xslashedrightarrow[2][]{%
  \ext@arrow 0055{\rightslashedarrowfill@}{#1}{#2}}
\makeatother

\DeclareMathSymbol{\mhyphen}{\mathord}{AMSa}{"39}

\DeclareSymbolFont{bbsymbol}{U}{bbold}{m}{n}
\DeclareMathSymbol{\fatsemi}{\mathbin}{bbsymbol}{"3B}
\DeclareMathSymbol{\fatcomma}{\mathbin}{bbsymbol}{"2C}

\newcommand{\worlds}{\mathcal W}

\newcommand{\conssym}{\triangleright}
\newcommand{\cons}{\ifmmode\mathrel{\conssym}\else\mbox{$\conssym$}\fi}

\DeclareFontFamily{U}{min}{}
\DeclareFontShape{U}{min}{m}{n}{<-> udmj30}{}

\renewcommand{\derives}[2]{\vdash_{\base{#1}}#2}
\newcommand{\support}[2]{\Vdash_{\base{#1}}#2}
\newcommand{\kripke}[2]{\models_{\base{#1}}#2}

\newcommand\Oval[1]{
\tikz[baseline=(a.base)]{\node(a){\phantom{#1}};\node[draw,black,inner sep=2pt, rounded rectangle]{#1}}
}

\newcommand{\contn}[2]{\begin{bmatrix}#1\\#2\end{bmatrix}}

\newcommand{\db}[1]{[\![#1]\!]}

\newtheorem{theorem}{Theorem}                         
\newtheorem{definition}[theorem]{Definition}                                                     
\newtheorem{lemma}[theorem]{Lemma}                             
\newtheorem{example}[theorem]{Example}                 
\newtheorem{proposition}[theorem]{Proposition}

\newcommand{\fillBox}{\hfill$\Box$}

\begin{document}

\title{Continuations and Completeness in  Proof-theoretic Semantics}

\author*[1,3]{\fnm{Tao} \sur{Gu}}\email{tao.gu22@gmail.com}
\equalcont{These authors contributed equally to this work.}

\author*[2,3]{\fnm{David} \sur{Pym}}\email{david.pym@sas.ac.uk}
\equalcont{These authors contributed equally to this work.}

\author*[4]{\fnm{Eike} \sur{Ritter}}\email{e.ritter@bham.ac.uk}
\equalcont{These authors contributed equally to this work.}

\author*[5]{\fnm{Edmund} \sur{Robinson}}\email{e.p.robinson@qmul.ac.uk}
\equalcont{These authors contributed equally to this work.}

\affil*[1]{\orgname{University of Southampton}, \orgaddress{Highfield, Southampton, SO17 1BJ, UK}}

\affil[2]{\orgdiv{Institute of Philosophy}, \orgname{School of Advanced Study, University of London}, \orgaddress{\street{Malet Street}, 
\city{London}, \postcode{WC1E 7HU}, 
\country{UK}}}

\affil*[3]{
\orgname{University College London}, \orgaddress{\street{Gower Street}, 
\city{London}, \postcode{WC1E 6BT}, 
\country{UK}}}

\affil[4]{\orgdiv{Department of Computer Science}, \orgname{University of Birmingham}, \orgaddress{\street{Edgbaston}, 
\city{Birmingham}, \postcode{B15 2TT}, 
\country{UK}}}

\affil[5]{\orgdiv{School of Electronic Engineering and Computer Science}, \orgname{Queen Mary, University of London}, \orgaddress{\street{Mile End Road}, 
\city{London}, \postcode{E1 4NS}, 
\country{UK}}}

\abstract{


This is a short paper about the relationship between logic and computation. More specifically, it is about a relationship between the completeness proof for intuitionistic propositional logic within the form of proof-theoretic semantics that is known as base-extension semantics and a fundamental idea from the theory of computation called continuation-passing semantics. The latter is  explained herein both in terms of reduction in natural deduction and the lambda calculus and in terms of proof-search. The relationship between completeness and continuations is explored through an analysis of Sandqvist's proof of the completeness theorem as seen from the mathematical perspective of Kripke's and Heyting's semantics. Our analysis can be seen to reveal how syntactic representations of continuations embody intensional semantical intuitions about the relationship between their meaning and use. These intuitions are made precise using the tools of proof-theoretic semantics.
}

\maketitle

\section{Introduction} \label{sec:intro} 

The core of this paper is an analysis of Sandqvist's \cite{Sandqvist2015base} proof of completeness of a form of base-extension semantics for intuitionistic propositional logic, contrasting with Piecha and Schroeder-Heister's demonstration of the incompleteness of a range of alternatives \cite{Piecha2019incompleteness}. 
Base-extension semantics is a form of proof-theoretic semantics in which the meanings of logical connectives are explained relative to atomic `bases'. Each base establishes a primitive derivation system on a set of atomic propositions. The base-extension semantics then determines the validity of propositional formulae built from these atomic propositions via logical connectives. 

Our purpose in this paper is to analyse Sandqvist's argument as seen through the lens of the established semantics given by Kripke and Heyting, using these to relate Sandqvist's definition to a form of continuation-passing semantics. Our analysis is presented in an essentially denotational way. It is, borrowing phrases from Schroeder-Heister's introduction to proof-theoretic semantics \cite{SEP-PtS}, presented in a basically extensional fashion, making it essentially model-theoretic, and therefore not  in this case addressing the intensional and epistemological aspects that are core to the proof-theoretic analysis. Our purpose here is a mathematical analysis of a completeness proof that enables us to link the structures in use to techniques taken from other areas, notably areas that deal with related issues to proof-theoretic semantics. 

In Section~\ref{sec:PtSbasics}, we explain, following Sandqvist \cite{Sandqvist2015base}, the elements of base-extension semantics in the setting of intuitionistic propositional logic. In Section~\ref{sec:KripkeHeyting}, we discuss, in the context of \cite{Sandqvist2015base}, the relationship between Kripke models and Heyting algebras. In Section~\ref{sec:Completeness1}, we explain Sandqvist's completeness proof in a way that will support our ongoing discussion. Section~\ref{sec:Continuations} introduces the ideas, from the semantics of computation, of a \emph{continuation} and of a \emph{CPS-transform}. 
In Section~\ref{sec:ContinuationsAndLogic}, we explain that the ideas of continuation and continuation-passing arise in proof theory --- specifically, through the manipulation and computation of proofs. Section~\ref{sec:Nuclei} considers 
the structure of Heyting algebras in more detail, showing how Sandqvist's semantics can be seen to arise through a specific interpretation. Then with all these components available, we are able to explain, in Section~\ref{sec:Completeness2}, how Sandqvist's semantics implicitly employs a CPS-transform --- this is the main result of the paper. We conclude, in Section~\ref{sec:proof-relevance}, with a discussion of the consequences of our result for a proof-relevant semantics in this setting.

\section{Atomic bases and proof-theoretic Semantics}\label{sec:PtSbasics}


Base-extension semantics is founded on the notion of a \emph{base}, the function of which is to give a consequence relation on a collection of atomic propositions. We will assume for the purpose of this paper that we are given a fixed countably infinite set $\At$ of atomic propositions. Bases, $\base{A}$, $\base{B}$, \dots \;  are presented as sets of deductive rules, similar in structure to natural deduction proof rules, but only involving specific atoms.




Sandqvist \cite{Sandqvist2015base} takes the general form of a rule to be:
\[
((P_1 \Rightarrow a_1),\dots,(P_n \Rightarrow a_n) 
    \Rightarrow b)
\]
where $a_1 , \dots , a_n$ and $b$ are atoms and each 
$P_i$ is a possibly empty finite set of atoms. 

This inductively defines a consequence relation $\derives{B}$, the \emph{derivability} in $\base{B}$ of an atom from a set of atoms: 
\[
    \begin{array}{rl}
    (\mbox{{Ref}}) & P , a \derives{B} a \\ 
    (\mbox{{App}}) & \mbox{For any $((P_1 \Rightarrow a_1),\dots,(P_n \Rightarrow a_n) \Rightarrow b)\in \base{B}$, and finite set of atoms $Q$,} \\ 
    & \mbox{if $Q , P_1 \derives{B} a_1$ \dots\, $Q , P_n \derives{B} a_n$, then $Q \derives{B} b$}
    \end{array}
\] 
Here, the comma between (sets of) atoms is the union as standard in proof theory.
The relations $\derives{B}{}$ are indeed consequence relations: 

\smallskip 

\begin{lemma}\label{lem:derives-is-consequence} If $\base{B}$ is a base, then
    \begin{enumerate}
        \item for any atom $a$, $a\derives{B}{a}$
        \item for any atoms $a$, $b$ and sets of atoms $P$, $Q$, 
        if $P\derives{B}{a}$ and $a,Q\derives{B}{b}$, then $P,Q\derives{B}{b}$, and 
        \item for any atom $a$ and sets of atoms $P$, $Q$, if $P\subseteq Q$ and 
        $P\derives{B}{a}$, then $Q\derives{B}{a}$. 
    \end{enumerate}
\end{lemma}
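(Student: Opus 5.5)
Item (1) is immediate from (Ref) with $P=\emptyset$. For items (2) and (3), derivability $\derives{B}{}$ is the least relation closed under (Ref) and (App), so I would argue by induction on the derivation of the relevant judgement. I would prove (3) first, since it is needed in the inductive step of (2).

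For (3), fix $P\subseteq Q$ and induct on the derivation of $P\derives{B}{a}$, with a statement general enough to cover all supersets $Q$ at once. In the (Ref) case, $P=P',a$, so $a\in Q$ and $Q=Q',a$ for some $Q'$; (Ref) then gives $Q\derives{B}{a}$. In the (App) case, there is a rule $((P_1\Rightarrow a_1),\dots,(P_n\Rightarrow a_n)\Rightarrow a)\in\base{B}$ with $P,P_i\derives{B}{a_i}$ for each $i$. Since $P,P_i\subseteq Q,P_i$, the induction hypothesis yields $Q,P_i\derives{B}{a_i}$, and (App) gives $Q\derives{B}{a}$. Because the induction hypothesis is quantified over all supersets, the changing context causes no difficulty.

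For (2), fix $P$ and $a$ with $P\derives{B}{a}$, and prove by induction on the derivation of $R\derives{B}{b}$ the following claim: for every $R$ and $b$ with $R\derives{B}{b}$, and every $Q$ with $R\subseteq a,Q$, we have $P,Q\derives{B}{b}$. The case $R=a,Q$ is the statement required.

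In the (Ref) case, $b\in R\subseteq a,Q$. If $b=a$, then $P\derives{B}{a}$ and (3) give $P,Q\derives{B}{a}$. Otherwise $b\in Q$, and (Ref) gives $P,Q\derives{B}{b}$.

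In the (App) case, there is a rule with conclusion $b$ and premises $R,P_i\derives{B}{a_i}$. Since $R,P_i\subseteq a,(Q,P_i)$, the induction hypothesis applied with $Q,P_i$ in place of $Q$ gives $P,Q,P_i\derives{B}{a_i}$ for each $i$. Applying (App) with context $P,Q$ then yields $P,Q\derives{B}{b}$.

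The main obstacle is choosing the right induction statement for (2). The discharged hypotheses $P_i$ enlarge the context in the premises, and $a$ may or may not lie in $R$. Two choices handle this. Quantifying over $Q$ absorbs the $P_i$ into the context. Using the inclusion $R\subseteq a,Q$ rather than the equality $R=a,Q$ avoids awkward case splits on whether $a\in Q$ or $a\in P_i$, since the comma is set union.
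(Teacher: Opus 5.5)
Your proof is correct and takes essentially the paper's route. Item (1) follows from (Ref), and item (2) goes by induction on the derivation of $a,Q\vdash b$ with the same (Ref)/(App) case split, feeding the premises $a,Q,P_i\vdash a_i$ back into the induction with $Q,P_i$ in place of $Q$. You make explicit two things the paper leaves tacit: the induction hypothesis must be generalised over $Q$, and the $a=b$ subcase of (Ref), which the paper calls ``immediate'', actually uses the weakening property (3), so proving (3) first is the right order.
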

\begin{proof}
    The first statement is immediate from (Ref), and the others are proved by induction. For example, statement 2 is proved by induction over the length of derivation of $a,Q\derives{B}{b}$. If this is an instance of (Ref), then there are two cases: $a=b$ and $a\neq b$, both immediate. If however it ends with an application of (App) using the rule $((P_1 \Rightarrow a_1),\dots,(P_n \Rightarrow a_n) \Rightarrow b)$, then we have $a,Q,P_1\derives{B}{a_1}$, \dots 
    $a,Q,P_n\derives{B}{a_n}$, all by shorter derivations. Hence $P,Q,P_1\derives{B}{a_1}$,\dots, $P,Q,P_n\derives{B}{a_n}$, and the result follows. 
\end{proof}

Other authors have considered more restricted forms of atomic rule; see, for example, Piecha \cite{Piecha2016completeness} for a discussion. But the more general form used by Sandqvist is needed to allow atomic analogues of all the usual rules for natural deduction. 

\smallskip 

\begin{example}
\begin{enumerate}
    \item A rule $(\Rightarrow b)\in\base{B}$, with no hypotheses, says directly that $b$ is derivable in the base ($\derives{B} b$), and can be seen as encoding the assertion of $b$ in any context $Q$: 
    \[\begin{prooftree}
        \hypo{}
        \infer1{Q\derives{B} b}
    \end{prooftree}\]
    \item A rule $((\Rightarrow a_1),(\Rightarrow a_2)\Rightarrow b)\in\base{B}$, where all the hypotheses are unconditional, can be seen as encoding a proof rule: 
    \[\begin{prooftree}
        \hypo{Q \derives{B} a_1}
        \hypo{Q \derives{B} a_2}
        \infer2{Q \derives{B} b}
    \end{prooftree}\]
    \item A rule $((\Rightarrow a),(b_1\Rightarrow c),(b_2\Rightarrow c) \Rightarrow c)\in\base{B}$ can be seen as encoding an analogue of the $\vee$-elimination proof rule: 
    \[\begin{prooftree}
        \hypo{Q\derives{B} a}
        \hypo{Q,b_1\derives{B} c}
        \hypo{Q,b_2\derives{B} c}
        \infer3{Q\derives{B} c}
    \end{prooftree}\]
\end{enumerate}   
\item More generally, the rule $((P_1 \Rightarrow a_1),\dots,(P_n \Rightarrow a_n) \Rightarrow b)$ can be written in natural deduction style: 
   \[\begin{prooftree}
        \hypo{Q,P_1 \derives{B} a_1}
        \hypo{\dots}
        \hypo{Q,P_n\derives{B} a_n}
        \infer3{Q\derives{B} b}
    \end{prooftree}\]
and $Q\derives{B} b$ is derivable iff it is the result of a complete proof tree (i.e., a proof tree with no open assumptions). 
\end{example}
 
Since bases are given by sets of rules, the inclusion of these sets extends to a natural notion of the inclusion of one base in another:
$\base{A} \subseteq \base{B}$. This in turn gives the following  monotonicity property for derivability, proved by induction: 

\smallskip 

\begin{lemma}\label{lemm:derives-monotonic-in-base}
If $\base{A} \subseteq \base{B}$ and $P \derives{A}{a}$, then $P \derives{B}{a}$. \fillBox
\end{lemma}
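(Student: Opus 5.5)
The plan is to prove the statement by induction on the derivation of $P \derives{A}{a}$, that is, on the inductive generation of $\derives{A}$ by (Ref) and (App). More precisely, I will show that the relation $\derives{B}{}$ is closed under the clauses that generate $\derives{A}{}$. It then follows that the least relation closed under those clauses, namely $\derives{A}{}$, is contained in $\derives{B}{}$. I will fix $\base{A} \subseteq \base{B}$ at the outset and quantify over all $P$ and $a$ inside the induction. This matters because the (App) clause changes the context from $Q$ to $Q,P_i$.

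\textbf{Base case (Ref).} If $P \derives{A}{a}$ holds by (Ref), then $P = P', a$ for some $P'$. The clause (Ref) for $\base{B}$ does not mention any rules of the base, so $P', a \derives{B}{a}$ holds as well.

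\textbf{Inductive step (App).} Suppose the derivation ends with an application of (App) using a rule $((P_1 \Rightarrow a_1),\dots,(P_n \Rightarrow a_n) \Rightarrow a) \in \base{A}$. Then the premisses are $P,P_1 \derives{A}{a_1}$, \dots, $P,P_n \derives{A}{a_n}$, each by a shorter derivation. The induction hypothesis gives $P,P_i \derives{B}{a_i}$ for each $i$. Since $\base{A} \subseteq \base{B}$, the same rule belongs to $\base{B}$. Applying (App) in $\base{B}$ with context $Q = P$ then yields $P \derives{B}{a}$. When $n = 0$ there are no premisses and the rule applies directly, so this case needs no separate treatment.

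I do not expect a genuine obstacle here; the argument is a routine structural induction, which is why the paper states it without proof. The only point needing care is that the induction hypothesis is applied at the varying contexts $P,P_i$. This is handled by inducting on derivations with $P$ and $a$ universally quantified, rather than fixing them in advance. No appeal to Lemma~\ref{lem:derives-is-consequence} is needed.
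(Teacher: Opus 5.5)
Your proof is correct and takes the same approach as the paper, which says only that the lemma is ``proved by induction'': an induction over the derivation by (Ref) and (App). The key step is your observation that any rule used in an (App) step for $\mathscr{A}$ is also available in $\mathscr{B}$.
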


\smallskip 

Moreover, derivability satisfies a form of deduction theorem: 

\smallskip 

\begin{lemma}\label{lem:derives-deduction}
    The following are equivalent, for any base $\base{B}$, atom $a$, and sets of atoms $P$ and $Q$:
    \begin{enumerate}
        \item $P,Q\derives{B}{a}$
        \item For all bases $\base{C}\supseteq\base{B}$, if $Q\derives{C}{p}$ holds for all $p\in P$, then $Q\derives{C}{a}$
        \item $Q\derives{B'}{a}$ where $\base{B'} = \base{B} \cup \{ (\Rightarrow p) \mid p\in P\}$. 
    \end{enumerate}
\end{lemma}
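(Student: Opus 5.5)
I will prove the cycle of implications $1 \Rightarrow 2 \Rightarrow 3 \Rightarrow 1$.

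\emph{From 1 to 2.} Suppose $P,Q\derives{B}{a}$, and let $\base{C}\supseteq\base{B}$ be a base with $Q\derives{C}{p}$ for every $p\in P$. By Lemma~\ref{lemm:derives-monotonic-in-base} we have $P,Q\derives{C}{a}$. Since $P$ is finite, write $P=\{p_1,\dots,p_k\}$ and remove the $p_i$ one at a time with the cut property (Lemma~\ref{lem:derives-is-consequence}(2)). From $Q\derives{C}{p_1}$ and $p_1,(Q,p_2,\dots,p_k)\derives{C}{a}$ we get $Q,p_2,\dots,p_k\derives{C}{a}$, using that $Q,Q=Q$ because comma is set union. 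Iterating this $k$ times gives $Q\derives{C}{a}$. This is a routine induction on $|P|$.

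\emph{From 2 to 3.} Take $\base{C}:=\base{B'}=\base{B}\cup\{(\Rightarrow p)\mid p\in P\}$. This clearly contains $\base{B}$. For each $p\in P$, the rule $(\Rightarrow p)$ has no premises, so (App) with context $Q$ gives $Q\derives{B'}{p}$. Hypothesis 2 then yields $Q\derives{B'}{a}$.

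\emph{From 3 to 1.} This is the only step needing a genuine induction, and I expect it to be the main obstacle. I will prove a slightly stronger statement so that the induction goes through with varying contexts: for every set $R$ and atom $c$, if $R\derives{B'}{c}$ then $R,P\derives{B}{c}$. The claim follows by taking $R=Q$ and $c=a$. The induction is on the derivation of $R\derives{B'}{c}$.
\begin{itemize}
\item For (Ref), $c\in R$, so (Ref) in $\base{B}$ gives $R,P\derives{B}{c}$.
\item For (App) with a rule $(\Rightarrow p)$ from the added part, $c=p\in P$, so $R,P\derives{B}{c}$ holds by (Ref). Rules of this shape that already lie in $\base{B}$ are handled by the next case.
\item For (App) with a rule $((P_1\Rightarrow a_1),\dots,(P_n\Rightarrow a_n)\Rightarrow c)\in\base{B}$, the premises are $R,P_i\derives{B'}{a_i}$. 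The induction hypothesis, applied with the enlarged context $R\cup P_i$, gives $R,P_i,P\derives{B}{a_i}$, which equals $(R,P),P_i\derives{B}{a_i}$ by commutativity of union. Applying the same rule in $\base{B}$ with context $R,P$ yields $R,P\derives{B}{c}$.
\end{itemize}

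The point to be careful about is the quantification over the context: the induction hypothesis must range over all contexts, because the premises of (App) extend $R$ by the $P_i$. Once it is set up that way, everything reduces to bookkeeping with set-union contexts.
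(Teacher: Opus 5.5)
Your proof is correct and follows the paper's route exactly: $1\Rightarrow 2$ by monotonicity in the base (Lemma~\ref{lemm:derives-monotonic-in-base}) plus cut (Lemma~\ref{lem:derives-is-consequence}), $2\Rightarrow 3$ by instantiating $\base{C}:=\base{B'}$, and $3\Rightarrow 1$ by induction on the derivation in $\base{B'}$. You also state the induction hypothesis for all contexts $R$, which is needed because the premises of (App) enlarge the context by the $P_i$; the paper leaves this implicit, and your formulation is the right way to make the induction go through.
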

\begin{proof}
    (1$\Rightarrow$2) is immediate from monotonicity of $\derives{D}{}$ in the base $\base{D}$ (Lemma~\ref{lemm:derives-monotonic-in-base}), and the fact that it is a consequence relation (Lemma~\ref{lem:derives-is-consequence}). 3 is an instance of 2. 1 follows from 3 by induction over the derivation of 
    $Q\derives{B'}{a}$. 
\end{proof}

Derivability for atoms is now extended to a notion of support for propositions built from atoms using the standard logical connectives.
Sandqvist's definition for this is: 

\smallskip  

\begin{definition}[support]
\label{def:sandqvist-support}
    The support relation, in which base $\base{B}$ \emph{supports} the proposition $\phi$, written $\support{B}{\phi}$, is defined by recursion on the structure of $\phi$ as follows: 
    \[
    \begin{array}{lr@{\quad}c@{\quad}l}
    (\mbox{\rm At}) & \support{B}{a} & \mbox{iff} & \mbox{
    $\derives{B}{a}$, for atom $a$} \\ 
    (\wedge) & \support{B}{\phi\wedge\psi} & \mbox{iff} & 
    \mbox{$\support{B}{\phi}$ and $\support{B}{\psi}$} \\ 
    (\supset) & \support{B}{\phi\supset\psi} &  \mbox{iff} & 
    \phi \support{B}{\psi} \\ 
    (\vee) &  \support{B}{\phi\vee\psi} & \mbox{iff} & \mbox{for every atom $a$ and every $\base{C}\supseteq\base{B}$, if $\phi\support{C}{a}$ and $\psi\support{C}{a}$,} \\
    & & & \mbox{then $\support{C}{a}$} \\ 
    (\bot) & \support{B}{\bot} & \mbox{iff} & \mbox{$\support{B}{a}$, for every atom $a$}
    \end{array}
    \]
    Here the clauses $(\supset)$ and $(\vee)$ refer to a relation of inference, which is denoted using the same symbol $\support{\base{B}}{}$ with abuse of notation: 
    \[
    \begin{array}{lr@{\quad}c@{\quad}l}
    (\mbox{\rm Inf}) & \mbox{For $\Theta \neq \emptyset$, $\Theta\support{B}\phi$} & \mbox{iff} &  \mbox{for every $\base{C}\supseteq\base{B}$, if $\support{C}{\theta}$ for every $\theta\in\Theta$,} \\ 
    & & & \mbox{then $\support{C}\phi$}
    \end{array} 
    \] 
    \fillBox
\end{definition}

\smallskip 

\begin{definition}[validity] \label{def:validity} 
The consequence relation of \emph{validity}, $\Vdash$, is given by 
$\Gamma \Vdash \phi$ iff, for all bases $\mathcal{B}$, $\Gamma \Vdash_\mathcal{B} \phi$.  
\fillBox
\end{definition}

\smallskip 

Note that clause \emph{(At)} states that, for an atom $a$, $\support{B}{a}$ iff $\derives{B}{a}$, but this extends to consequence. 
\begin{lemma}\label{lem:support-eq-derives-atoms}
    For any set $P$ of atoms, and atom $a$, $P\support{B}{a}$ iff 
    $P\derives{B}{a}$. 
\end{lemma}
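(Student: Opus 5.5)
Both directions come from unfolding the definitions and applying Lemma~\ref{lem:derives-deduction} with $Q=\emptyset$. First, handle the degenerate case $P=\emptyset$. Here the inference clause (Inf) does not apply, so $P\support{B}{a}$ just means $\support{B}{a}$. By clause (At) this is $\derives{B}{a}$, and there is nothing more to do. So assume $P\neq\emptyset$.

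Unfold both sides. By (Inf), $P\support{B}{a}$ says: for every $\base{C}\supseteq\base{B}$, if $\support{C}{p}$ for every $p\in P$, then $\support{C}{a}$. By clause (At), each $\support{C}{p}$ is $\derives{C}{p}$ and $\support{C}{a}$ is $\derives{C}{a}$, since $p$ and $a$ are atoms. So $P\support{B}{a}$ is literally the statement: for all $\base{C}\supseteq\base{B}$, if $\derives{C}{p}$ for all $p\in P$, then $\derives{C}{a}$. This is condition~2 of Lemma~\ref{lem:derives-deduction} with $Q=\emptyset$. Condition~1 of that lemma, with $Q=\emptyset$, reads $P\derives{B}{a}$. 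The equivalence $1\Leftrightarrow 2$ therefore gives the result at once.

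For the direction from $P\support{B}{a}$ to $P\derives{B}{a}$, which goes through condition~3, I would check the instantiation explicitly. Take $\base{C}=\base{B}'=\base{B}\cup\{(\Rightarrow p)\mid p\in P\}$. Then $\derives{B'}{p}$ holds for every $p\in P$ by (App) applied to the rule $(\Rightarrow p)$. Hence $\derives{B'}{a}$, and by $3\Rightarrow 1$ of Lemma~\ref{lem:derives-deduction} we get $P\derives{B}{a}$.

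The converse direction follows from monotonicity (Lemma~\ref{lemm:derives-monotonic-in-base}) together with cut (Lemma~\ref{lem:derives-is-consequence}), applied repeatedly to discharge the finitely many hypotheses in $P$. There is no real obstacle in this proof. The only point needing care is that (Inf) is stated only for nonempty $\Theta$, so the case $P=\emptyset$ must be separated as above.
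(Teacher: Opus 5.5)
Your proof is correct and follows the paper's argument: unfolding (Inf) and (At) turns $P\support{B}{a}$ into condition~2 of Lemma~\ref{lem:derives-deduction} with $Q=\emptyset$, and that lemma gives $P\derives{B}{a}$. Your separate treatment of $P=\emptyset$, where (Inf) does not apply, and your explicit re-checks of the two directions are harmless additions that the paper leaves implicit.
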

\begin{proof}
    This follows immediately from (At), (Inf), and Lemma~\ref{lem:derives-deduction}.
\end{proof}

Given the importance of intensionality in base-extension semantics, we have reproduced Sandqvist's definition exactly. But some points are worth noting. Sandqvist does not give an interpretation of 
$\top$, but could have used
\[
\begin{array}{l@{\quad}r@{\quad}c@{\quad}l}
    (\top) & \support{B}{\top} & \mbox{always} & 
\end{array}
\]
The definition of inference (Inf) is directly analogous to Kripke implication, where worlds are bases and the partial order between them is inclusion. As a result the interpretation of $(\supset)$ is monotonic in the base. Other rules are also monotonic, either enforcing this directly ($\vee$), inheriting this from monotonicity for atoms ($\At$,$\bot$), or via induction over their components ($\wedge$). 

\smallskip 

\begin{lemma}
    If $\support{B}{\phi}$ and $\base{B}\subseteq\base{C}$, then 
    $\support{C}{\phi}$. \fillBox
\end{lemma}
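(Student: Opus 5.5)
We argue by structural induction on $\phi$, for arbitrary bases $\base{B}\subseteq\base{C}$, following the case split of Definition~\ref{def:sandqvist-support}. Because the induction hypothesis will only be used at immediate subformulae, we state it for all pairs of bases at once.

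For an atom $a$, $\support{B}{a}$ means $\derives{B}{a}$, that is, $\emptyset\derives{B}{a}$. Lemma~\ref{lemm:derives-monotonic-in-base} then gives $\derives{C}{a}$, hence $\support{C}{a}$. The case $\bot$ is the same argument applied to every atom simultaneously: each $\derives{B}{a}$ transfers to $\derives{C}{a}$. For $\top$ (if included) there is nothing to prove. For $\phi\wedge\psi$, we apply the induction hypothesis to both conjuncts.

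The cases $\supset$ and $\vee$ need no induction hypothesis, because their clauses already quantify over all extensions. Suppose $\support{B}{\phi\supset\psi}$, i.e.\ $\phi\support{B}{\psi}$, i.e.\ for every $\base{D}\supseteq\base{B}$ with $\support{D}{\phi}$ we have $\support{D}{\psi}$. To get $\phi\support{C}{\psi}$, take any $\base{D}\supseteq\base{C}$. Transitivity of inclusion gives $\base{D}\supseteq\base{B}$, so the hypothesis applies to $\base{D}$. The $\vee$ clause is handled identically: any $\base{D}\supseteq\base{C}$ satisfies $\base{D}\supseteq\base{B}$, so the condition quantified over extensions of $\base{B}$ yields the condition for $\base{D}$. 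Here we use that the inner inferences $\phi\support{D}{a}$ and $\psi\support{D}{a}$ mean the same thing whichever outer base we started from.

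There is no real obstacle. The only point needing care is the implication and disjunction cases: one must observe that restricting the quantification from extensions of $\base{B}$ to extensions of $\base{C}$ is legitimate, because the set of extensions only shrinks. One should also note that the (Inf) relation $\Theta\support{D}{\phi}$ depends only on $\base{D}$, which justifies treating these cases without appeal to the induction hypothesis.
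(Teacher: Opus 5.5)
Your proof is correct and follows the same route as the paper's brief justification. Both argue by structural induction: atoms and $\bot$ inherit monotonicity from Lemma~\ref{lemm:derives-monotonic-in-base}, $\wedge$ follows from the induction hypothesis, and $\supset$ and $\vee$ are monotone outright because their clauses already quantify over all extensions of the base.
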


\smallskip 

As a result, we have $\support{B}{\phi}$ if and only if for every $\base{C}\supseteq \base{B}$, $\support{C}{\phi}$, which is the right hand side of (Inf) in the case that $\Theta$ is empty. Hence, if we allowed inference from an empty set of hypotheses, then we would get a concept extensionally equivalent to support, and the condition that $\Theta$ be non-empty in (Inf) can be seen as redundant. 

Sandqvist notes that the interpretations he gives for $\wedge$ and 
$\supset$ are `traditional'.  They are the ones used by most 
authors in the field (see {e.g.,} Piecha \cite{Piecha2016completeness} 
for a range of approaches). The interpretations of $\bot$ and $\vee$ are more novel. Sandqvist ascribes the interpretation for $\bot$ to Dummett \cite{Dummett1991} and others, and indicates that the interpretation for $\vee$ is loosely based on the elimination rule, developing previous work of Prawitz \cite{Prawitz2005} and Ferreira \cite{Ferreira2006} (cf. \cite{Prawitz1971ideas,Prawitz2006meaning}). 

Sandqvist \cite{Sandqvist2015base} establishes the soundness and completeness 
of the natural deduction calculus NJ (with 
consequence relation denoted $\vdash$) with respect to the base-extension semantics of intuitionistic propositional logic: 

\smallskip 

\begin{theorem}[soundness and completeness] \label{sec:s-and-c}
$\Gamma \vdash \phi$ \,iff\, $\Gamma \Vdash \phi$ \fillBox
\end{theorem}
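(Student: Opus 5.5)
Soundness goes by induction on NJ derivations; completeness follows Sandqvist's atomic-simulation strategy.

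\emph{Soundness.} We prove, by induction on the structure of an NJ derivation of $\Gamma\vdash\phi$, that $\Gamma\support{B}{\phi}$ holds for every base $\base{B}$. The work is one check per rule.
\begin{itemize}
\item The rules for $\wedge$ and $\supset$ follow directly from the clauses $(\wedge)$, $(\supset)$ and (Inf), together with monotonicity of support in the base.
\item For $\bot$-elimination, first show by induction on $\chi$ that if $\support{C}{a}$ for every atom $a$, then $\support{C}{\chi}$ for every formula $\chi$. For atoms this is immediate. For $\vee$ we use that $\support{D}{a}$ holds for all $\base{D}\supseteq\base{C}$, and the other connectives are routine.
\item For $\vee$-introduction: if $\support{C}{\phi}$ and $\phi\support{C}{a}$, then $\support{C}{a}$.
\item $\vee$-elimination is the delicate case, because the clause $(\vee)$ only licenses conclusions that are atoms. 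We therefore prove a generalised lemma by induction on $\chi$: if $\support{B}{\phi\vee\psi}$, $\phi\support{B}{\chi}$ and $\psi\support{B}{\chi}$, then $\support{B}{\chi}$. For $\chi$ an atom this is the clause itself. For $\chi=\chi_1\wedge\chi_2$, apply the hypothesis componentwise. For $\chi=\chi_1\supset\chi_2$, pass to an extension $\base{C}$ supporting $\chi_1$ and use monotonicity. For $\chi=\chi_1\vee\chi_2$, unfold the definition to reduce to an atom in an extension. For $\bot$, reduce to all atoms.
\end{itemize}

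\emph{Completeness.} Suppose $\Gamma\Vdash\phi$. Following Sandqvist, assign to every subformula $\chi$ of $\Gamma\cup\{\phi\}$ a fresh atom $\chi^\flat$, with $p^\flat=p$ for atoms. Build a base $\base{N}$ containing atomic analogues of the NJ introduction and elimination rules for these flattened formulas. For example:
\begin{itemize}
\item $((\Rightarrow\chi^\flat),(\Rightarrow\psi^\flat)\Rightarrow(\chi\wedge\psi)^\flat)$ for $\wedge$-introduction;
\item $((\chi^\flat\Rightarrow\psi^\flat)\Rightarrow(\chi\supset\psi)^\flat)$ for $\supset$-introduction;
\item $((\Rightarrow(\chi\vee\psi)^\flat),(\chi^\flat\Rightarrow c),(\psi^\flat\Rightarrow c)\Rightarrow c)$ for every atom $c$;
\item $((\Rightarrow\bot^\flat)\Rightarrow c)$ for every atom $c$.
\end{itemize}
The proof then has three steps.
\begin{enumerate}
\item \textbf{Key lemma.} For every $\base{C}\supseteq\base{N}$ and every subformula $\chi$, $\support{C}{\chi}$ iff $\derives{C}{\chi^\flat}$. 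This is proved by induction on $\chi$, using Lemmas~\ref{lem:derives-is-consequence}, \ref{lem:derives-deduction} and \ref{lem:support-eq-derives-atoms}.
\item \textbf{Transfer to derivability.} From $\Gamma\Vdash\phi$ we get $\Gamma\support{N}{\phi}$, and the key lemma converts this into $\Gamma^\flat\derives{N}{\phi^\flat}$ via Lemma~\ref{lem:derives-deduction}.
\item \textbf{Translation back.} By induction on the atomic derivation, any derivation in $\base{N}$ from $\Gamma^\flat$ translates into an NJ derivation of $\phi$ from $\Gamma$, replacing each $\chi^\flat$ by $\chi$. Each base rule becomes the corresponding NJ rule, and the rules of the form $(\Rightarrow\ldots\Rightarrow c)$ become instances of $\vee$- or $\bot$-elimination.
\end{enumerate}

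\emph{Main obstacle.} The hardest point is the $\vee$ case of the key lemma, in the direction from support to derivability. Suppose $\support{C}{\chi\vee\psi}$. We instantiate the clause $(\vee)$ with the atom $c=(\chi\vee\psi)^\flat$ and the base $\base{C}$ itself. We must then verify $\chi\support{C}{c}$: for any $\base{D}\supseteq\base{C}$ with $\support{D}{\chi}$, the induction hypothesis gives $\derives{D}{\chi^\flat}$, and the $\vee$-introduction rule of $\base{N}$ then yields $\derives{D}{c}$; the same argument gives $\psi\support{C}{c}$. The converse direction uses the atomic $\vee$-elimination rules for arbitrary atoms $c$. The $\bot$ and $\supset$ cases are similar but easier, the latter using extension by $(\Rightarrow\chi^\flat)$ as in Lemma~\ref{lem:derives-deduction}.
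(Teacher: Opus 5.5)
Your proposal is correct and follows the same route as the paper's (that is, Sandqvist's) completeness argument. You prove the equivalence $(\dagger)$ for all bases extending $\base{N}$ by induction, with the $\vee$ case resting on the $\vee$-elimination rules of $\base{N}$ with arbitrary atomic target; you then pass to $\vdash_{\base{N}}$ via Lemmas~\ref{lem:support-eq-derives-atoms} and~\ref{lem:derives-deduction}, and finish with the de-flattening of Proposition~\ref{prop:validity-reflection}.

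The differences are only presentational. The paper obtains $(\dagger)$ (Proposition~\ref{prop:prop-equiv-atom}) by reasoning inside the interpretation induced by the nucleus $J(U)=\bigwedge_{b\in\At}(U\to\db{b})\to\db{b}$. It also simply assumes soundness, which in that framework is immediate because $\worlds^\uparrow_J$ is a complete Heyting algebra. You instead unfold the clauses directly and check soundness by hand; your generalised $\vee$-elimination lemma is the concrete counterpart of the fact that every $\db{\chi}_J$ is a $J$-fixpoint.
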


\smallskip 

Sandqvist's proof of completeness for this interpretation uses the familiar method of encoding objects in more primitive forms. He encodes arbitrary propositions as chosen atoms and sets up a special base $\base{N}$ that allows him to ensure that the derivation of these atoms mimics deduction in an intuitionistic natural deduction system. So derivation corresponds to intuitionistic provability. Then he shows that the semantics he has chosen forces propositions to be logically equivalent to their corresponding atoms, whenever bases are required to extend $\base{N}$. From this completeness follows. 

Herein, we intend to examine Sandqvist's semantics and these constructions in the light of similar constructions for Kripke models and Heyting algebras, bringing out links to techniques from other areas, notably the use of continuations. 

\section{Kripke models and Heyting algebras}\label{sec:KripkeHeyting}

In Section~\ref{sec:PtSbasics}, we noted the monotonicity property and indicated its link to Kripke models. Now we make that connection more concrete. We suppose we are 
given a fixed set $\At$ of atoms and a fixed set $\worlds$ of bases over it, ordered by inclusion. This forms the frame for a Kripke model, in which the truth of an atom at a world is given by derivability and then the truth of a proposition at a world is defined by the standard Kripke clauses: 
\[ 
\begin{array}{lr@{\quad}c@{\quad}l}
    \mbox{$(\At)$} & \kripke{B}{a} & \mbox{iff} & 
    \mbox{$\derives{B}{a}$, atom $a$ } \\  
    \mbox{$(\top)$} & \kripke{B}{\top} & \mbox{always} &  \\  
    \mbox{$(\wedge)$} & \kripke{B}{\phi\wedge\psi} & \mbox{iff} &  
    \mbox{$\kripke{B}{\phi}$ and $\kripke{B}{\psi}$} \\
    \mbox{$(\supset)$} & \kripke{B}{\phi\supset\psi} & \mbox{iff} &  
    \mbox{for every $\base{C}\supseteq\base{B}$, if $\kripke{C}{\phi}$, 
    then $\kripke{C}{\psi}$}  \\ 
    \mbox{$(\vee)$} & \kripke{B}{\phi\vee\psi} & \mbox{iff} & \mbox{$\kripke{B}
    {\phi}$ or $\kripke{B}{\psi}$} \\ 
    \mbox{$(\bot)$} & \kripke{B}{\bot} & \mbox{never} &  \\ 
\end{array}
\]
There is also a form of internal consequence at a base:
\[ 
\begin{array}{lr@{\quad}c@{\quad}l}
\mbox{(Inf)} & \Theta\kripke{B}\phi & \mbox{iff} & 
\mbox{for every $\base{C}\supseteq\base{B}$, if $\kripke{C}{\theta}$ for every $\theta\in\Theta$, then $\kripke{C}\phi$}
\end{array}
\]
As a result, the clause for $\supset$ could be reformulated: 
\[ 
\begin{array}{lr@{\quad}c@{\quad}l}
\mbox{($\supset$)} & \kripke{B}{\phi\supset\psi} &  \mbox{iff} & \phi\kripke{B}{\psi} 
\end{array}
\]
This semantics coincides with Sandqvist's, except for the clauses for $\bot$ and $\vee$, as he notes. Moreover it is also the semantics used by a number of other authors, see Piecha \cite{Piecha2016completeness}. Note that again we have ({cf.} Lemma~\ref{lem:support-eq-derives-atoms}): 

\smallskip 

\begin{lemma}\label{lem:kripke-eq-derives-atoms}
    For any set $P$ of atoms, and atom $a$, $P\kripke{B}{a}$ iff $P\derives{B}{a}$. \fillBox  
\end{lemma}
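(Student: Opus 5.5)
The argument follows Lemma~\ref{lem:support-eq-derives-atoms} closely: unfold the Kripke inference clause and reduce it to the deduction theorem for derivability, Lemma~\ref{lem:derives-deduction}. We read $P\kripke{B}{a}$ via (Inf) with $\Theta = P$. One caveat concerns the frame: $\worlds$ must contain the bases we construct. Throughout we take $\worlds$ to be all bases over $\At$, as in Sandqvist's setting.

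First, by the $(\At)$ clause, for every base $\base{C}$ and atom $p$ we have $\kripke{C}{p}$ iff $\derives{C}{p}$. Substituting this into (Inf), $P\kripke{B}{a}$ says exactly: for every $\base{C}\supseteq\base{B}$, if $\derives{C}{p}$ for all $p\in P$, then $\derives{C}{a}$. This is condition 2 of Lemma~\ref{lem:derives-deduction} with $Q=\emptyset$. That lemma makes it equivalent to condition 1, namely $P\derives{B}{a}$, which is the claim. If $P$ is empty, (Inf) reduces by monotonicity to $\kripke{B}{a}$, which is $\derives{B}{a}$ directly by $(\At)$.

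For a self-contained check of the two directions, argue as follows. For ($\Leftarrow$), suppose $P\derives{B}{a}$ and let $\base{C}\supseteq\base{B}$ derive every $p\in P$. Lemma~\ref{lemm:derives-monotonic-in-base} gives $P\derives{C}{a}$. Repeated use of the cut property, Lemma~\ref{lem:derives-is-consequence}(2), then removes each hypothesis $p$ and yields $\derives{C}{a}$. For ($\Rightarrow$), instantiate the hypothesis at $\base{B'}=\base{B}\cup\{(\Rightarrow p)\mid p\in P\}$. Each $p$ is derivable in $\base{B'}$ by (App), so $\derives{B'}{a}$. Condition 3 of Lemma~\ref{lem:derives-deduction} then returns $P\derives{B}{a}$.

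The only real obstacle is that the ($\Rightarrow$) direction needs the extended base $\base{B'}$ to be a world of the Kripke frame. If $\worlds$ were an arbitrary fixed set of bases, the statement could fail. I would therefore state explicitly that $\worlds$ is closed under adding axioms $(\Rightarrow p)$; taking all bases suffices. Under that assumption the argument is identical to that of Lemma~\ref{lem:support-eq-derives-atoms}. This is expected, because the $(\At)$ and (Inf) clauses of the two semantics coincide.
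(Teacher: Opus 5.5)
Your proof is correct and is essentially the paper's own argument. The paper gives no separate proof here but refers to Lemma~\ref{lem:support-eq-derives-atoms}, whose proof is exactly your reduction via (At) and (Inf) to the equivalence of conditions 1--3 of Lemma~\ref{lem:derives-deduction} with $Q=\emptyset$. Your remark that the frame $\worlds$ must contain the extended base $\base{B}\cup\{(\Rightarrow p)\mid p\in P\}$ (for instance, by taking $\worlds$ to be all bases) identifies an assumption the paper leaves implicit, and you are right to state it.
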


\smallskip 

We can further re-frame this definition, giving the semantics of a proposition as the set of bases that support it: 
\[
\db{\theta} = \{\base{B} \in \worlds \mid\  \kripke{B}{\theta}\}
\]
We view this as a mathematical convenience, not as a philosophical statement intended to capture the intrinsic meaning of the proposition. It does some violence to the underlying approach of proof-theoretic semantics, which is to give meaning to propositions by giving a simple account of circumstances that justify them, not by flattening that to a list of situations. Nevertheless, we contend that it is useful to see how PtS approaches appear through this lens. 

This semantic viewpoint gives the possible interpretations of propositions as sets of bases. Monotonicity implies that every set $\db{\phi}$ is an upwards-closed set of worlds. The upwards-closed sets of worlds form a lattice, $\worlds^\uparrow$, in fact a complete Heyting algebra, and so we are led to the interpretation of connectives via the standard interpretation of intuitionistic propositional logic in such a structure (e.g., \cite{Dummett1977,MM1992,Vickers-TvL1989}). 
We have: 
\[
\begin{array}{lrcl}
    \mbox{(At)} & \db{a} & = & \mbox{$\{\base{B} \in \worlds \mid \base{B}\vdash a \}$} \\ 
    \mbox{($\top$)} & \db{\top} & = & \worlds \\ 
    \mbox{($\bot$)} & \db{\bot} & = &  \emptyset \\ 
    \mbox{($\wedge$)} & \db{\phi\wedge\psi} & = & \db{\phi}\cap\db{\psi} \\ 
    \mbox{($\supset$)} & \db{\phi\supset\psi} & = & \mbox{$\bigcup \,\{\, U \mid U$ is upwards closed and $U \cap \db{\phi} \subseteq \db{\psi} \,\}$}  \\   
    \mbox{($\vee$)} & \db{\phi\vee\psi} & = &  \db{\phi}\cup\db{\psi} 
\end{array}
\] 

Sandqvist's interpretation differs from this, and that naturally raises the questions: 
\begin{itemize}
    \item[--] Why does it differ? 
    \item[--] Is there some coherent approach in which it does fit?
\end{itemize}
In order to motivate our answer to these, we look closer into Sandqvist's completeness proof.






\section{Proving completeness}
\label{sec:Completeness1}

Sandqvist's proof of completeness uses an encoding of arbitrary 
propositions as atoms, and of the propositional deductive system into a base. Starting with a collection of atoms, \At, for every proposition 
$\phi$ defined over \At, we introduce a new atom \Oval{$\phi$} 
(Sandqvist uses the notation $\phi^\flat$). The 
aim is to force the atom \Oval{$\phi$} to be semantically equivalent 
to the proposition $\phi$. Thus \Oval{$a\vee b$} will be an atom 
semantically equivalent to the disjunction of the two atoms $a$ and 
$b$. Similarly for contexts $\Gamma = \{\phi_1,\dots,\phi_n\}$ we can define $\mbox{\Oval{$\Gamma$}} = \{
\mbox{\Oval{$\phi_1$}},\dots,\mbox{\Oval{$\phi_n$}}\}$.
Sandqvist is a bit more delicate than this. His completeness 
argument works at the level of a given single logical consequence $\Phi \vdash \psi$. In order 
to handle this, he only needs to consider subformulae of the consequence, 
thus allowing a restriction to a finite number of additional atoms and a finite base. Like Sandqvist, for an atom $a\in\At$ we take $\mbox{\Oval{$a$}}=a$. We call the resulting set of atoms $\At^\ast$. 

The special base $\base{N}$ is defined over $\At^\ast$ and consists of rules mimicking those of Natural Deduction. For instance, if 
\[\begin{prooftree}
    \hypo{\begin{array}{c}\\ 
    \varphi\vee\psi
    \end{array}} 
    \hypo{\begin{array}{c}[\varphi]\\
    \theta\end{array}}
    \hypo{\begin{array}{c}[\psi]\\
    \theta
    \end{array}}
    \infer3{\theta}
    \end{prooftree}
\]
is a valid instance of $\vee$-elimination, then $\base{N}$ contains 
\[
((\Rightarrow\mbox{\Oval{$\varphi\vee\psi$}}), 
  (\mbox{\Oval{$\varphi$}}\Rightarrow\mbox{\Oval{$\theta$}}), 
  (\mbox{\Oval{$\varphi$}}\Rightarrow\mbox{\Oval{$\theta$}}) 
  \Rightarrow\mbox{\Oval{$\theta$}})
  \]
the atomic rule corresponding to: 
\[\begin{prooftree}
    \hypo{\begin{array}{c}\\ 
    \Oval{$\varphi\vee\psi$}
    \end{array}
    }
    \hypo{\begin{array}{c}[\Oval{$\varphi$}]\\ 
       \Oval{$\theta$}\end{array}}
    \hypo{\begin{array}{c}[\Oval{$\psi$}]\\
        \Oval{$\theta$}\end{array}}
    \infer3{\Oval{$\theta$}}
    \end{prooftree}\]
The special base $\base{N}$ consists of all such translates of all the possible instances of all of the natural deduction rules. Sandqvist minimises the size of this base by restricting to atoms representing subformulae of a given sequent. As a result, his base is finite.  

Note that $\Oval{$a$} = a$ is a natural and handy choice, but it is not essential. If we had not identified atoms $a$ and \Oval{$a$}, then we would also have added rules enforcing that equivalence: $((\Rightarrow a)\Rightarrow\mbox{\Oval{$a$}})$ and 
$((\Rightarrow\mbox{\Oval{$a$}})\Rightarrow a)$.

Sandqvist's proof hinges on his translation and base $\base{N}$ having two key properties, $(\dagger)$, which expresses the semantic equivalence between a formula and its translate (see section \ref{sec:Completeness2}), and $(\ddagger)$, which expresses the equivalence between derivability in $\base{N}$ of atomic translates and provability of the original formulae in NJ, and in our notation is:
\[\mbox{$(\ddagger)$ for any $P$ and $a$, if $P \vdash_\base{N} a$ and if \Oval{$\Gamma$} $=$ $P$ and \Oval{$\phi$} $=$ $a$, then $\Gamma \vdash \phi$ }\]
\smallskip

\begin{proposition}[Sandqvist \cite{Sandqvist2015base}]
\label{prop:validity-reflection}
   For any propositions $\phi_1,\dots,\phi_n$, and $\psi$: $\mbox{\Oval{$\phi_1$}},\dots,\mbox{\Oval{$\phi_1$}}\derives{N}{\mbox{\Oval{$\psi$}}}$
    iff 
$\phi_1,\dots,\phi_n\vdash\psi$ is derivable in intuitionistic propositional logic. 
\end{proposition}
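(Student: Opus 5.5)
We prove the two directions separately; both are inductions, on derivations in NJ and on derivations in $\base{N}$ respectively. Throughout we use the fact that the translation $\phi\mapsto\mbox{\Oval{$\phi$}}$ is injective on the (sub)formulae concerned, so that any atom of $\At^\ast$ is $\mbox{\Oval{$\phi$}}$ for a unique $\phi$. Here we use the convention $\mbox{\Oval{$a$}}=a$ and take $\At^\ast$ to contain only the new atoms and the original atoms. The statement also has a typo: the left-hand context should be $\mbox{\Oval{$\phi_1$}},\dots,\mbox{\Oval{$\phi_n$}}$.

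For the direction from right to left, take an NJ derivation of $\Gamma\vdash\psi$, with $\Gamma=\{\phi_1,\dots,\phi_n\}$, in the sequent-style presentation of natural deduction. We show by induction on the derivation that every sequent $\Delta\vdash\chi$ occurring in it satisfies $\mbox{\Oval{$\Delta$}}\derives{N}{\mbox{\Oval{$\chi$}}}$.
\begin{itemize}
\item An assumption $\Delta,\chi\vdash\chi$ is covered by (Ref).
\item Every other inference has premisses $\Delta,\Delta_i\vdash\chi_i$, where $\Delta_i$ are the discharged hypotheses and are empty or singletons in NJ, and conclusion $\Delta\vdash\chi$. By construction $\base{N}$ contains the atomic rule $((\mbox{\Oval{$\Delta_1$}}\Rightarrow\mbox{\Oval{$\chi_1$}}),\dots\Rightarrow\mbox{\Oval{$\chi$}})$. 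Applying (App) with $Q=\mbox{\Oval{$\Delta$}}$ to the induction hypotheses gives the conclusion.
\end{itemize}
Some rules need a direct check that they fit the atomic rule format; $\bot$-elimination, for instance, becomes $((\Rightarrow\mbox{\Oval{$\bot$}})\Rightarrow\mbox{\Oval{$\theta$}})$. If one insists on a finite $\base{N}$ restricted to subformulae, we must also ensure the derivation uses only subformulae of the sequent. This can be arranged by normalisation and the subformula property, or more simply by allowing all formulae, as the paper's description of $\base{N}$ suggests.

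For the direction from left to right, which is $(\ddagger)$, we proceed by induction on the derivation of $P\derives{N}{b}$ and prove a more general claim. Whenever $P=\mbox{\Oval{$\Gamma$}}$ and $b=\mbox{\Oval{$\phi$}}$, then $\Gamma\vdash\phi$ in NJ.
\begin{itemize}
\item If the derivation is an instance of (Ref), then $\mbox{\Oval{$\phi$}}\in\mbox{\Oval{$\Gamma$}}$. By injectivity $\phi\in\Gamma$, so $\Gamma\vdash\phi$.
\item If the last step is (App) with a rule of $\base{N}$, then that rule is the translate of some NJ inference instance with premisses $[\Delta_i]\,\chi_i$ and conclusion $\phi$. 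Again injectivity is what lets us recover this instance from the atomic rule. The premisses of (App) are $\mbox{\Oval{$\Gamma$}},\mbox{\Oval{$\Delta_i$}}\derives{N}{\mbox{\Oval{$\chi_i$}}}$. Their contexts are again translates, namely of $\Gamma\cup\Delta_i$, so the induction hypothesis gives $\Gamma,\Delta_i\vdash\chi_i$. Applying the same NJ rule then yields $\Gamma\vdash\phi$.
\end{itemize}

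The main obstacle is bookkeeping rather than mathematics. We must check that every atom appearing in a derivation in $\base{N}$ is a translate, which holds because $\At^\ast$ consists only of translates. We must also check that the correspondence between NJ rule instances and atomic rules is exact in both directions, especially for the discharge rules $\supset$-introduction and $\vee$-elimination. The $\vee$-elimination rule as displayed in the paper has a typo, $\mbox{\Oval{$\varphi$}}$ twice; its second hypothesis must read $(\mbox{\Oval{$\psi$}}\Rightarrow\mbox{\Oval{$\theta$}})$, and this is what the argument needs.
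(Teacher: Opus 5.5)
Your proposal is correct and takes essentially the same route as the paper. One direction goes by induction on the NJ derivation, translating each rule instance into its atomic counterpart in $\base{N}$ and applying (App); the other goes by induction on the derivation in $\base{N}$, erasing the ovals. Your generalisation to arbitrary translated contexts, your use of injectivity of the translation, and your subformula-property caveat make explicit what the paper's sketch leaves implicit. Your two typo corrections are also right: the repeated $\phi_1$ in the statement, and the repeated $\varphi$ in the displayed atomic $\vee$-elimination rule.
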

\begin{proof}
    The core idea is that a derivation $\mbox{\Oval{$\phi_1$}},\dots,\mbox{\Oval{$\phi_n$}}\derives{N}{\mbox{\Oval{$\psi$}}}$ expressed in tree form maps to an intuitionistic proof simply by removing all the ovals (mapping the function 
    $\mbox{\Oval{$\phi$}} \mapsto \phi$), and an intuitionistic proof to a derivation by doing the reverse. 
    
    Technically, the proof in each direction is by induction, in one direction on the length of the derivation in $\base{N}$, and in the other on the structure of the proof in NJ. 
\end{proof}

Note that for this to work when $\base{N}$ is restricted to subformulae of a sequent, then we need the subformula property for provability in intuitionistic logic. 

Gentzen's seminal work \cite{Gentzen1934,gentzen1969investigations} already contains the 
idea that the introduction rules express the core meaning of a 
connective and the elimination rules in some sense express the 
completeness of this ({cf.} Dummett \cite{Dummett1991}). There is a body of work establishing to what 
extent we can systematically derive elimination rules from certain 
forms of introduction rules, so-called `generalized elimination' 
rules ({cf.} \cite{dyckhoff2015some}). 

One way of expressing this completeness is that we can prove the identity axiom for the application of a connective from the identity axioms for the formulae to which it is applied, and hence that the identity axiom for all formulae follows from that for atoms only. We will see the importance of this later when we use a refinement of these arguments to establish the equivalence of a proposition
$\phi$ and its atomic representative \Oval{$\phi$}. This is easiest seen in calculi like Gentzen's LJ \cite{gentzen1969investigations} (we've removed some structural rules)  
\[
\begin{prooftree}
  \hypo{}
  \infer1{\varphi\to\varphi}
  \infer1{\varphi\to\varphi\vee\psi}
  \hypo{}
  \infer1{\psi\to\psi}
  \infer1{\psi\to\varphi\vee\psi}
  \infer2{\varphi\vee\psi\to \varphi\vee\psi}
\end{prooftree} 
\hspace{4em}\begin{prooftree}
  \hypo{}
  \infer1{\varphi\to\varphi}
  \hypo{}
  \infer1{\psi\to\psi}
  \infer2{\varphi,\psi\to\varphi\wedge\psi}
  \infer1{\varphi\wedge\psi\to \varphi\wedge\psi}
\end{prooftree}
\]
These correspond to natural deduction proofs 
\[
\begin{prooftree}
  \hypo{\varphi\vee\psi}
  \hypo{[\varphi]}
  \infer1{\varphi\vee\psi}
  \hypo{[\psi]}
  \infer1{\varphi\vee\psi}
  \infer3{\varphi\vee\psi}
\end{prooftree} 
\hspace{4em}\begin{prooftree}
  \hypo{\varphi\wedge\psi}
  \infer1{\varphi}
  \hypo{\varphi\wedge\psi}
  \infer1{\psi}
  \infer2{\varphi\wedge\psi}
\end{prooftree}
\]
which make the point less clear. 

This form of completeness implies that the logical rules determine the 
connectives up to logical equivalence. So, for example, if $\vee_1$ and $\vee_2$ both have the introduction and elimination rules for 
$\vee$, then, replacing parts of the proofs above with the analogues for $\vee_1$ and $\vee_2$, we have 
\[
\begin{prooftree}
  \hypo{}
  \infer1{\varphi\to\varphi}
  \infer1{\varphi\to\varphi\vee_2\psi}
  \hypo{}
  \infer1{\psi\to\psi}
  \infer1{\psi\to\varphi\vee_2\psi}
  \infer2{\varphi\vee_1\psi\to \varphi\vee_2\psi}
\end{prooftree} 
\hspace{4em}\mbox{or}\hspace{4em}\begin{prooftree}
  \hypo{\varphi\vee_1\psi}
  \hypo{[\varphi]}
  \infer1{\varphi\vee_2\psi}
  \hypo{[\psi]}
  \infer1{\varphi\vee_2\psi}
  \infer3{\varphi\vee_2\psi}
\end{prooftree} 
\]

This feeds into a failed attempt to prove completeness for base-extension semantics of Intuitionistic Propositional Logic using the Kripke interpretations given above. 

We want to prove the equivalence of a formula with the corresponding atom by induction on the structure of the formula. 

We have $\phi\wedge\psi \to \mbox{\Oval{$\phi\wedge\psi$}}$:
\[
\begin{prooftree}
    \hypo{\phi\wedge\psi}
    \infer1[$\wedge E$]{\phi}
    \infer1[Ind]{\Oval{$\phi$}}
    \hypo{\phi\wedge\psi}
    \infer1[$\wedge E$]{\psi}
    \infer1[Ind]{\Oval{$\psi$}}
    \infer2[$\base{N}$]{\Oval{$\phi\wedge\psi$}}
\end{prooftree}
\]
In particular, the last step, marked with $\base{N}$, uses the following rule in $\base{N}$:
\[
    ( (\Rightarrow \mbox{\Oval{$\phi$}}), (\Rightarrow \mbox{\Oval{$\psi$}}) \Rightarrow \mbox{\Oval{$\phi \land \psi$}} )
\]
We will omit the details of the application of such $\base{N}$-rules in the following cases.

This is to be read as a semantic argument in which the steps are 
validated through the soundness of intuitionistic rules ($\wedge E$), induction (Ind) and the soundness of the rules of $\base{N}$ for atoms. Note the structural similarity to the previous discussion of uniqueness of $\wedge$. 

Conversely, we have $\mbox{\Oval{$\phi\wedge\psi$}}\to \phi\wedge\psi$:
\[
\begin{prooftree}
    \hypo{\Oval{$\phi\wedge\psi$}}
    \infer1[$\base{N}$]{\Oval{$\phi$}}
    \infer1[Ind]{\phi}
    \hypo{\Oval{$\phi\wedge\psi$}}
    \infer1[$\base{N}$]{\Oval{$\psi$}}
    \infer1[Ind]{\psi}
    \infer2[$\wedge I$]{\phi\wedge\psi}
\end{prooftree}
\]

Similarly, we can validate $\phi\vee\psi \to \mbox{\Oval{$\phi\vee\psi$}}$:
\[
\begin{prooftree}
    \hypo{\phi\vee\psi}
    \hypo{\phi}
    \infer1[Ind]{\Oval{$\phi$}}
    \infer1[$\base{N}$]{\Oval{$\phi\vee\psi$}}
    \hypo{\psi}
    \infer1[Ind]{\Oval{$\psi$}}
    \infer1[$\base{N}$]{\Oval{$\phi\vee\psi$}}
    \infer3[$\vee E$]{\Oval{$\phi\vee\psi$}}
\end{prooftree}
\]

However, we run into problems with $\mbox{\Oval{$\phi\vee\psi$}}\to \phi\vee\psi$:
\[
\begin{prooftree}
    \hypo{\Oval{$\phi\vee\psi$}}
    \hypo{\Oval{$\phi$}}
    \infer1[Ind]{\phi}
    \infer1[$\vee I$]{\phi\vee\psi}
    \hypo{\Oval{$\psi$}}
    \infer1[Ind]{\psi}
    \infer1[$\vee I$]{\phi\vee\psi}
    \infer3[??]{\phi\vee\psi}
\end{prooftree}
\]
This is not a valid deduction. It fails at the last step 
because $\phi\vee\psi$ is not an atom, and hence the last deduction 
is not an instance of an atomic rule. In other words, the following rule that would pass the last step is not an atomic rule:
\[
    (\Rightarrow (\mbox{\Oval{$\phi \lor \psi$}}), (\mbox{\Oval{$\phi$}} \Rightarrow \phi \lor \psi), (\mbox{\Oval{$\psi$}} \Rightarrow \phi \lor \psi) \Rightarrow \phi \lor \psi )
\]
Nor do we have any other rule that would allow us to perform this last step. 
For our strategy to work, we 
have to find a way round this, and that is what Sandqvist did in his 
proof of completeness. In order to understand his approach we take a 
closer look at the structure of proof rules. 

The proof of equivalence for conjunction just given goes through because the rules only feature components of the conjunction: 
\[\begin{prooftree}
    \hypo{\phi}\hypo{\psi}\infer2{\phi \wedge \psi}
\end{prooftree}\hspace{3em}
\begin{prooftree}
    \hypo{\phi \wedge \psi}\infer1{\phi}
\end{prooftree}\hspace{3em}
\begin{prooftree}
    \hypo{\phi \wedge \psi}\infer1{\psi}
\end{prooftree}\]
The rules for $\supset$ and $\top$ have a similar property, and hence we can find proofs of equivalence for these connectives too. 
However,  the elimination rule for disjunction features a formula $\chi$ 
which is not syntactically related to the disjunction: 
\[\begin{prooftree}
    \hypo{\phi}\infer1{\phi \vee \psi}
\end{prooftree}\hspace{3em}
\begin{prooftree}
    \hypo{\psi}\infer1{\phi \vee \psi}
\end{prooftree}\hspace{3em}
\begin{prooftree}
    \hypo{\begin{array}{c} \\ 
    \phi \vee \psi 
    \end{array}
    }
    \hypo{\begin{array}{c}[\phi]\\
    \chi\end{array}}
    \hypo{\begin{array}{c}[\psi]\\
    \chi\end{array}}
    \infer3{\chi}
\end{prooftree}\]
It is this proposition $\chi$ that is causing us the problem. We need to look more closely at it. What $\chi$ represents is the idea that our final goal in constructing the proof is not local, but is some proposition $\chi$, potentially very different from the proposition we are currently working on. This idea also arises in the area of programming in the form of a `continuation', where it has been widely studied, notably for us in the context of giving continuation-based semantics to programs and continuation passing style transformations. 

\section{Continuations: an introduction}\label{sec:Continuations}




In his historical survey on the origins of the concept of continuation \cite{reynolds1993discoveries}, Reynolds notes that concepts recognizable as a form of continuation have been introduced a considerable number of times; for example as `a method of program transformation (into continuation-passing style), a style of definition interpreters 
\cite{Reynolds1972}, and a style of denotational semantics (in the sense of Scott and Strachey). Reynolds goes on to say: `In each of these settings, by representing ``the meaning of the rest of the program'' as a function or procedure, continuations provide an elegant description of a variety of language constructs, including call by value and goto statements.'

So, a \emph{continuation} is essentially the `rest of a computation' reified as an object that we can manipulate. 
To put this into context, if 
we want to understand the computational meaning of an expression in 
a programming language, then we face some important restrictions. 
First, the identifiers in the expression have to be resolved 
and given meaning. Essentially, we are working with closed terms, or 
at least open terms in an environment where the identifiers are 
bound to closed terms. Second, languages have restrictions on what 
kind of expression can actually be run as a program. Many programming 
languages specify that a runnable program is a function `\texttt{main}';  
for example, $C$, where \texttt{main} returns an integer, Java, where it returns void, or Rust, where it has to be called `\texttt{main}', but where there is no requirement on the type that it returns.  

As a result, in order to give a computational semantics for program fragments, we need a way to put them into a complete runnable setting. 
Moreover, most practical programming language have features that produce non-standard flow of control. Originally these included jumps and goto's, but they still include low level features such as break, and higher-level ones such as exceptions. Continuations can be used to explain the meaning of such features. 

In a continuations-based account, functions or procedures have an implicit additional argument, the continuation. This can be seen as a generalization and reification of the return address. If the function returns normally, then its return value is given as a parameter to the continuation supplied as an argument. If not, then the continuation is ignored, and another one is used instead. For example (simplifying a little) in programs with exceptions, the function of an exception handler is to provide an alternative continuation that can be used in the event of code generating that particular exception. So if code terminates normally, then the function returns using the standard continuation supplied as a parameter. And if it raises an exception it returns using the continuation provided by the exception handler. 

In a typed functional setting, the continuations for an expression of type $A$ can be represented as functions $A \to R$, where $R$ is the result type of a complete program. As a result, the semantics of an expression of type $A$ is a function from continuations to results of type $(A \to R) \to R$. 

The reification of continuations can be taken further, with translations from programs that do not manipulate continuations as data objects to programs that do. This is called a \emph{CPS-transform} --- that is, transform to continuation-passing style. Such a transformation can specify an evaluation mechanism for the source language. 

Taking lambda calculus as a basic programming language, we have the standard Plotkin-Fischer call-by-value transformation \cite{plotkin1975call}: 
\begin{eqnarray*}
  \db{x} & =  & \lambda k. k x\\
  \db{\lambda x. e }& = & \lambda k. k (\lambda x. \db{e})\\
  \db{f\, e} & = & \lambda k. \db{f} (\lambda f. \db{e} (\lambda e. f e k))
\end{eqnarray*}
Note that each of these translations is an expression with head $\lambda$ expecting a continuation argument. Moreover, if we provide an expression that also has head lambda as that continuation, and follow the standard functional programming restriction of not evaluating under lambda, then the term we are evaluating has only a single redex. This can be seen in an example reduction. Consider the term $f\, e$, where $f$ evaluates to $\lambda x. e'$, and $e$ evaluates to a value $v$, whose CPS transform is $\lambda k. kv$. We evaluate $f\, e$
with continuation $\lambda x. ?$. Then, putting the active continuation in blue, we have:
\begin{eqnarray*}
    \db{f\, e} {\color{blue}(\lambda x. ?)} & = & (\lambda k. \db{f} (\lambda f. \db{e} (\lambda e. f e k))) {\color{blue}(\lambda x. ?)}\\
    & \leadsto & \db{f} {\color{blue}(\lambda f. \db{e} (\lambda e. f e (\lambda x. ?)))}\\
    & \leadsto^* & \db{\lambda x. e'} {\color{blue}(\lambda f. (\db{e} (\lambda e. f e (\lambda x. ?)))}\\
    & = & (\lambda k. k (\lambda x. \db{e'})) {\color{blue}(\lambda f. (\db{e} (\lambda e. f e (\lambda x. ?)))}\\
    & \leadsto & (\lambda f. \db{e} {\color{blue}(\lambda e. f e (\lambda x. ?))}) (\lambda x. \db{e'})\\
    & \leadsto & \db{e} {\color{blue} (\lambda e. (\lambda x. \db{e'}) e (\lambda x. ?))}\\
    & \leadsto^* & (\lambda k. k\, v) {\color{blue} (\lambda e. (\lambda x. \db{e'}) e {\color{blue}(\lambda x. ?)})}\\
    & \leadsto & (\lambda e. (\lambda x. \db{e'}) e {\color{blue}(\lambda x. ?)}) \, v\\
    & \leadsto & (\lambda x. \db{e'}) v {\color{blue}(\lambda x. ?)}\\
    & \leadsto & (\db{e'}[ x:= v]) {\color{blue}(\lambda x. ?)}\\
    & \leadsto^*
\end{eqnarray*}
The step from the second line to the third is immediate if $f=\lambda x.e'$, but if not then it represents the evaluation of $f$ to its normal form $\lambda x.e'$. The continuation remains unchanged at the end of this process. Similarly, later we have the evaluation of $e$ to its normal form $v$, which is then passed to $\db{e'}$. As a result, assuming that we don't evaluate under lambda's, the transformation ensures that there is a call-by-value evaluation of the expression given to it. 

Note that if we type $x$ and $e$ as having type $A$, $e'$ as having type $B$, and $f$ as having type $A\to B$, then we can type $\db{x}$ and $\db{e}$ as having type 
$(A\to R)\to R$, $\db{e'}$ as having type $(B\to R) \to R$, 
and $\db{f}$ as having type $A\to (B\to R)\to R$. So we can take the CPS-transform as mapping a type $A$ to $(A\to R)\to R$.

\section{Continuations and logic}\label{sec:ContinuationsAndLogic}

Once we look, it is easy to find continuations and continuation-passing in logic --- in particular, through readings of proofs and rules of inference that relate to processes of computation. Readers familiar with the notion of propositions-as-types \cite{Howard1980,Girard1989} will see how this view underpins the approach of this section. 

The ideas we discuss here can be traced back to the Brouwer-Heyting-Kolmogorov (BHK) interpretation of intuitionistic logic. In this interpretation proofs, or justifications, are interpreted as mathematical objects. A justification of $A\supset B$ is a process for transforming a justification of $A$ into a justification of $B$, and when we translate this into a formal mathematical model of logical deduction, this is reified as a function. Similarly a justification of $A\wedge B$ encapsulates justifications of $A$ and $B$, and is reified as an ordered pair. As a result the operations for manipulating standard connectives can be seen as instances of standard mathematical constructions. The formalization of these processes took place roughly in tandem and in the same broad community and so it is not surprising to see a degree of convergent evolution, culminating in the well-known 
propositions-as-types correspondence (see \cite{Howard1980,Girard1989} for convenient and appropriate summaries), in which we have correspondences between propositions and types, and between justifications of propositions and elements of types. Later, a third party entered the room, and we see a further link to the type systems of functional programming languages, and expressions in the language.  

Let us adopt this viewpoint and examine the $\vee$-elimination rule:  
\[\begin{prooftree}
    \hypo{\begin{array}{c} \\ 
    \phi \vee \psi 
    \end{array}}
    \hypo{\begin{array}{c}[\phi]\\ \chi\end{array}}
    \hypo{\begin{array}{c}[\psi]\\ \chi\end{array}}
    \infer3{\chi}
\end{prooftree}\]
This rule can be justified by the argument that a proof such as 
\[
\begin{prooftree}
    \hypo{}
    \ellipsis{}{\phi}
    \infer1[$\vee$I]{\phi \vee \psi}
    \hypo{[\phi]}
    \ellipsis{}{\chi}
    \hypo{[\psi]}
    \ellipsis{}{\chi}
    \infer3[$\vee$E]{\chi}
\end{prooftree}
\qquad\qquad\mbox{can be reduced to}\qquad\qquad 
\begin{prooftree}
    \hypo{}
    \ellipsis{}{\phi}
    \ellipsis{}{\chi}
\end{prooftree}
\]
We can see here that the conclusion of the overall proof is $\chi$, and thus $\chi$ is the analogue of the result of the computation. That means we can interpret a proof element $\begin{array}{c}[\phi] \\ \chi \end{array}$ as a form of continuation. 

Once we do that we can go further. We can move the formulae in the proof rule to put it in the form: 
\[
\begin{prooftree}
    \hypo{\begin{array}{c}[\phi]\\\chi\end{array}}
    \hypo{\begin{array}{c}[\psi]\\\chi\end{array}}
    \infer2{\begin{array}{c}[\phi \vee \psi]\\\chi\end{array}}
\end{prooftree}
\]


We make a further change in notation, replacing $\begin{array}{c}[\phi]\\ \chi \end{array}$ by $\contn{\phi}{\chi}$, which we use to represent a continuation taking a proposition $\phi$ and producing an end result $\chi$. We can then reformulate the standard intuitionistic proof rules as introductions of propositions and introductions of continuations. In doing this we have changed the direct value-based formulations of $\wedge$ and $\to$ elimination into formulations that manipulate continuations, inverting the rules in the process.  

\[
\setlength{\arraycolsep}{1.5cm}
\begin{array}{cc}
\begin{prooftree}
    \hypo{\phi}\hypo{\psi}\infer2{\phi\wedge \psi} 
\end{prooftree} &
\begin{prooftree}
    \hypo{\contn{\phi}{\chi}}\infer1{\contn{\phi\wedge \psi}{\chi}} 
\end{prooftree}\hspace{3em}
\begin{prooftree}
    \hypo{\contn{\psi}{\chi}}\infer1{\contn{\phi\wedge \psi}{\chi}} 
\end{prooftree}\\
\\
\begin{prooftree}
    \hypo{\phi}\infer1{\phi\vee \psi}
\end{prooftree}\hspace{3em}
\begin{prooftree}
    \hypo{\psi}\infer1{\phi \vee \psi}
\end{prooftree} & 
\begin{prooftree}
    \hypo{\contn{\phi}{\chi}}\hypo{\contn{\psi}{\chi}}\infer2{\contn{\phi\vee \psi}{\chi}}
\end{prooftree}\\
\\
\begin{prooftree}
    \hypo{}\infer1{\top}
\end{prooftree}\\
\\
&\begin{prooftree}
    \hypo{}\infer1{\contn{\bot}{\chi}}
\end{prooftree}\\
\\
\begin{prooftree}
    \hypo{\contn{\phi}{\psi}}
    \infer1{\phi\to \psi}
\end{prooftree}& 
\begin{prooftree}
    \hypo{\phi}
    \hypo{\contn{\psi}{\chi}}
    \infer2{\contn{\phi \to \psi}{\chi}}
\end{prooftree}
\end{array}
\]
This formulation exhibits a strict symmetry between conjunction and disjunction, true and false. 

We can also see continuations in the reductive view of proof, a viewpoint that links strongly to both proof-search and logic programming. Indeed, reductive proof can be seen as a continuation-passing style account. 
In deductive proof, we combine complete proof-objects, and the active part of the proof is at the bottom of the proof tree. By contrast in reductive proof the object representing the current state of the computation is an incomplete proof culminating in the ultimate goal, and so is recognizable as a continuation. In this case the active part of the structure is at the top of the proof-tree. 

\newcommand{\query}{\;{?\!-}\;} 

As an example, employing a sequential presentation, consider the logic program (loosely following the set-up in \cite{miller1989logical}, which lies within the 
scope of base-extension semantics \cite{GP-DF-NAF-B-eS-2023})
\[
\Gamma = a_1 \supset a \,,\, a_2 \supset a \,,\, a_2 
\]
with goal 
\[
    \Gamma \query a 
\]

Resolution of $a$ against $\Gamma$ produces two matches, with 
$a_1\supset a$ and $a_2\supset a$, giving us the two incomplete proofs $C_1$ and $C_2$ below, both of which can be regarded as continuations generated from the original query (the orginal continuation for the process): 
\[C_1 = \begin{prooftree}
    \hypo{\mbox{\rm succeed}}
    \infer1{\Gamma\query a_1\supset a}
    \hypo{\Gamma\query a_1}
    \infer2{\Gamma\query a}
\end{prooftree}\mbox{\ \ \  and\ \ \  }
C_2 = \begin{prooftree}
    \hypo{\mbox{\rm succeed}}
    \infer1{\Gamma\query a_2\supset a}
    \hypo{\Gamma\query a_2}
    \infer2{\Gamma\query a}
\end{prooftree},\]
We can view these as living in a pseudo-Python term
\[\mbox{\tt try: $C_1$ except fail: $C_2$}\]
Trying to develop $C_1$, we resolve $a_1$ against $\Gamma$, which fails: 
\[
\begin{prooftree}
    \hypo{\mbox{\rm succeed}}
    \infer1{\Gamma\query a_1\supset a}
    \hypo{\rm{fail}}
    \infer1{\Gamma\query a_1}
    \infer2{\Gamma\query a}
\end{prooftree}
\]
So, we can regard this as raising the exception {\tt fail}, which is then caught by the exception handler. In the continuations-based account, this error results in abandoning the current continuation, $C_1$, and swapping in $C_2$ as the new continuation. This is then developed, resolving $a_2$ against $\Gamma$ which now succeeds and we are left with a complete proof: 
\[
\begin{prooftree}
    \hypo{\mbox{\rm succeed}}
    \infer1{\Gamma\query a_2\supset a}
    \hypo{\rm{succeed}}
    \infer1{\Gamma\query a_2}
    \infer2{\Gamma\query a}
\end{prooftree}
\]

As a result, the flow of control in a proof-search can be seen as a form of continuation passing, using the continuations to implement the raising and handling of exceptions allowing non-local control flow. 






So, fragments of the standard presentation of intuitionistic propositional logic can be presented naturally as versions of continuations, and pushing that analogy leads to structural insights that are not at first apparent. 

Now, the standard logical rules allow arbitrary formulae in the role of the result of a continuation. We have seen that this is linked to a problem in proving completeness. So, it seems reasonable only to allow atoms in that role. In other words, we restrict complete proofs to be proofs of atoms. We will take compound propositions as being equivalent if they are indistinguishable on this basis. This amounts to equating formulae if they cannot be distinguished in the context of the proof of an atom. This means quotienting the lattice we use for representing our semantics, or, alternatively, as we shall see, passing our formulae through a form of CPS-transform.



\section{Nuclei and Heyting algebra quotients}\label{sec:Nuclei}

We now show how to move from a Heyting algebra semantics to a less discriminating one based on it. For an authoritative account of this, see Johnstone \cite{johnstone1982stone}.

Suppose $H$ and $K$ are complete Heyting algebras, and $p: H\to K$ is 
an order-preserving map from $H$ onto $K$, presenting $K$ as a quotient of $H$, then under reasonable conditions there is an order-preserving map back again $i: K\to H$ such that $p\circ i = 1_K$, and so giving an embedding of $K$ in $H$. If we let $j = i \circ p : H \to H$, then we can view $j$ as giving an approximation of an element of $H$ by an element of $K$. There are two ways of doing this: a best over-approximation in which 
$(i \circ p) h = \bigvee \{ h' \mid ph' \leq ph\}$, and a best under-approximation in which $(i\circ p) h = \bigwedge \{ h'\mid ph \leq ph'\}$. In the language of category theory, these correspond to left and right adjoints of $p$, but in the context of partial orders this structure is often called a `Galois connection'. We will be concerned with the best over-approximations, and approach them by characterizing the functions $j : H \to H$ that produce them. 

\smallskip 

\begin{definition}[nucleus]\label{def:nucleus} 
If $H$ is a complete Heyting algebra, then a {\em nucleus} on $H$ is a function $j: H\to H$ such that $j$ is \smallskip
\begin{enumerate}
    \item order-preserving: if $a\leq b$, then $j(a) \leq j(b)$
    \item increasing: $a\leq j(a)$
    \item idempotent: $j^2 (a) = j(a)$
    \item $\wedge$-preserving: $j(a\wedge b) = ja \wedge jb$. 
\end{enumerate} \smallskip 
If $j$ is a nucleus on $H$, then we write $H_j$ for the set of 
fixpoints of $j$,  
$\{a \in H \mid a = j(a)\}$, which is of course also the image of $j$, $\{ j(a) \mid a \in H\}$. \fillBox
\end{definition}

\smallskip 

\begin{lemma}
    $H_j$ is a complete Heyting algebra. 
\end{lemma}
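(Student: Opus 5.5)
The plan is to show that $H_j$, ordered as a subset of $H$, is a complete lattice whose meets are computed as in $H$ and whose joins are $j$ applied to the join in $H$. We then check that the implication of $H$ restricts to $H_j$ and still satisfies the Heyting adjunction there.

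\textbf{Step 1: meets.} For any family $(a_i)$ in $H_j$, take $m = \bigwedge_i a_i$ computed in $H$. Since $m \leq a_i$ and $j$ is order-preserving, $j(m) \leq j(a_i) = a_i$ for every $i$, so $j(m) \leq m$. Because $j$ is increasing, $j(m) = m$. Hence $H_j$ is closed under arbitrary meets; the empty meet gives $\top \in H_j$. This already makes $H_j$ a complete lattice.

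\textbf{Step 2: joins.} The join of $(a_i)$ in $H_j$ is $j(\bigvee_i a_i)$. It lies in $H_j$ by idempotence and is above each $a_i$ because $j$ is increasing. If $b \in H_j$ is an upper bound, then $\bigvee_i a_i \leq b$, so $j(\bigvee_i a_i) \leq j(b) = b$.

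\textbf{Step 3: implication.} This is the main point, and the only place the $\wedge$-preservation axiom of Definition~\ref{def:nucleus} is used. The claim is that if $b \in H_j$ and $a \in H$, then $a \to b \in H_j$.

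1. Since $(a \to b) \wedge a \leq b$, applying $j$ and using $\wedge$-preservation gives $j(a \to b) \wedge j(a) \leq j(b) = b$.
2. Since $a \leq j(a)$, we get $j(a \to b) \wedge a \leq b$.
3. By the Heyting adjunction in $H$, $j(a \to b) \leq a \to b$, and the reverse inequality holds because $j$ is increasing.

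So for $a, b \in H_j$ the element $a \to b$ of $H$ lies in $H_j$. Since meets in $H_j$ are those of $H$, for $c \in H_j$ we have $c \wedge a \leq b$ iff $c \leq a \to b$. This is exactly the Heyting adjunction in $H_j$, so $H_j$ is a complete Heyting algebra.

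I expect Step 3 to be the only step needing care. The naive alternative of defining implication in $H_j$ as $j(a \to b)$ would also work, but showing that $a \to b$ is already a fixpoint gives the cleanest argument. Optionally, one can also note that $j : H \to H_j$ preserves finite meets and arbitrary joins, which exhibits $H_j$ as a quotient in the sense described before Definition~\ref{def:nucleus}, though the lemma does not require this.
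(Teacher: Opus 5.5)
Your proof is correct, but it reaches the Heyting structure by a different route from the paper's.

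The paper inherits $\top$ and binary meets from $H$, and shows that $j(\bigvee_\alpha a_\alpha)$ is the join in $H_j$; this is where completeness comes from. It then checks the infinite distributive law in $H_j$ via $j(\bigvee_\alpha a_\alpha)\wedge b = j(\bigvee_\alpha a_\alpha)\wedge j(b) = j(\bigvee_\alpha(a_\alpha\wedge b))$. It leaves implicit the standard fact that a complete lattice satisfying this frame law is a Heyting algebra, with $a\to b=\bigvee\{c\mid c\wedge a\leq b\}$.

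You instead get completeness from closure under arbitrary meets. You rightly note that this needs only monotonicity and inflation, whereas the paper uses $\wedge$-preservation even for closure under binary meets. You then get the Heyting adjunction directly, by showing that $a\to b$ computed in $H$ is already a fixpoint whenever $b$ is. That is exactly the lemma the paper states and proves immediately afterwards, and by the same computation: $a\wedge j(a\to b)\leq j(a)\wedge j(a\to b)=j(a\wedge(a\to b))\leq b$.

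So your argument merges the paper's two lemmas. It avoids the frame-implies-Heyting fact and identifies the implication of $H_j$ explicitly. The paper's route instead makes explicit that meets distribute over the $j$-joins, which is what underlies the later clause $\db{\phi\vee\psi}_j = j(\db{\phi}_j\vee_H\db{\psi}_j)$. Your Step~2 is not needed for the lemma itself, but it recovers exactly that join formula.
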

\begin{proof}
    $H_j$ inherits its top element and meets from $H$. First $\top\in H_j$: $\top \leq j \top \leq \top$, and hence $\top = j \top$. Furthermore, if $a,b\in H_j$, then so is $a\wedge b$: since
    $j (a\wedge b) = j(a) \wedge j(b) = a \wedge b$. 
    Moreover, if $\{a_\alpha \mid \alpha\in A\}$ is an arbitrary set of elements in $H$, $b\in H_j$, then $a_\alpha \leq b$ for all 
    $\alpha$ iff $j(\bigvee_\alpha a_\alpha) \leq b$. It follows that if $\{a_\alpha \}_{\alpha \in A} \subseteq H_j$, then $j(\bigvee_\alpha a_\alpha)$ is the least upper bound of $\{a_\alpha \mid \alpha\in A\}$ in $H_j$. 
    It remains to show that meet distributes over sup, but
    $j(\bigvee_\alpha a_\alpha) \wedge b = j(\bigvee_\alpha a_\alpha) \wedge j(b) = j((\bigvee_\alpha a_\alpha) \wedge b)
    = j(\bigvee_\alpha (a_\alpha \wedge b))$, as required. 
\end{proof}

\begin{lemma}
    If $H$ is a complete Heyting algebra, and if $j$ is a nucleus on $H$, then,  for any $a, b\in H$, if $b\in H_j$, then $a\to b \in H_j$.  Hence, if $a$ and $b$ are both in $H_j$, then so is $a\to b$, and it is also the exponential in $H_j$. 
\end{lemma}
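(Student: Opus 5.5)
To show $a\to b\in H_j$ when $b\in H_j$, I will prove $j(a\to b)\leq a\to b$; the reverse inequality is the fact that $j$ is increasing. By the adjunction defining implication in $H$, this is equivalent to $j(a\to b)\wedge a\leq b$.

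For that inequality I will use the remaining nucleus axioms in turn. Because $j$ is increasing, $a\leq j(a)$, so $j(a\to b)\wedge a\leq j(a\to b)\wedge j(a)$. Preservation of meets turns the right-hand side into $j((a\to b)\wedge a)$. In any Heyting algebra $(a\to b)\wedge a\leq b$, so monotonicity of $j$ gives $j((a\to b)\wedge a)\leq j(b)=b$, the last step because $b$ is a fixpoint. Chaining these gives $j(a\to b)\wedge a\leq b$, hence $j(a\to b)\leq a\to b$ and $a\to b=j(a\to b)\in H_j$. Idempotence is not used here.

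The second sentence of the statement is now immediate in part: if $a,b\in H_j$, then $a\to b\in H_j$ by what was just shown. It remains to check that $a\to b$ is the exponential in $H_j$. By the previous lemma, meets in $H_j$ are computed as in $H$, and $H_j$ carries the induced order. So for $c\in H_j$ we have $c\wedge a\leq b$ in $H_j$ iff the same holds in $H$, iff $c\leq a\to b$. Since $a\to b$ itself lies in $H_j$, it satisfies the defining universal property of implication in $H_j$.

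There is no real obstacle here. The only step that needs care is the opening move of replacing $a$ by $j(a)$ inside the meet, which lets $\wedge$-preservation apply to the pair $a\to b$ and $a$.
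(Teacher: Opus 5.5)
Your proposal is correct and follows the paper's proof essentially verbatim, using the same chain $a\wedge j(a\to b)\leq j(a)\wedge j(a\to b)=j(a\wedge(a\to b))\leq j(b)=b$ to get $j(a\to b)\leq a\to b$. You also explicitly check that $a\to b$ satisfies the universal property of the exponential in $H_j$, using the fact that meets and order in $H_j$ are inherited from $H$; this fills in what the paper leaves as ``the result follows''.
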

\begin{proof}
   We need to show $j(a\to b) \leq a\to b$. Now $a\wedge j(a\to b) \leq j(a) \wedge j(a\to b) = j(a\wedge (a\to b)) \leq j (b) = b$, and the result follows.  
\end{proof}

The following two lemmas are both proved by easy checks on the conditions for a nucleus: 

\smallskip

\begin{lemma}
    If $h$ is any element of a complete Heyting algebra $H$, then $\lambda a. (a\to h)\to h$ is a nucleus on $H$. \fillBox
\end{lemma}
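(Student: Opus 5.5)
Write $j(a) = (a\to h)\to h$. I will verify the four conditions of Definition~\ref{def:nucleus} in turn, using only the adjunction $c\wedge a\leq b$ iff $c\leq a\to b$ together with its standard consequences. These consequences are modus ponens $a\wedge(a\to b)\leq b$ and the antitonicity of $(-)\to h$: if $a\leq b$ then $b\to h\leq a\to h$.

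Order-preservation follows by applying antitonicity twice. If $a\leq b$, then $b\to h\leq a\to h$, and hence $(a\to h)\to h\leq (b\to h)\to h$. For the increasing property, I need $a\leq (a\to h)\to h$, which by the adjunction is equivalent to $a\wedge(a\to h)\leq h$; this is modus ponens.

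For idempotence, $j(a)\leq j(j(a))$ is the increasing property applied to $j(a)$. For the reverse inequality I will first show the triple-negation law $((a\to h)\to h)\to h = a\to h$. The direction $\geq$ is the increasing property applied to $a\to h$. The direction $\leq$ comes from antitonicity applied to $a\leq (a\to h)\to h$. Substituting this identity gives $j(j(a)) = (((a\to h)\to h)\to h)\to h = (a\to h)\to h = j(a)$.

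The meet-preservation step is the only one with real content. Monotonicity gives $j(a\wedge b)\leq j(a)\wedge j(b)$. For the converse, by the adjunction it suffices to show $j(a)\wedge j(b)\wedge((a\wedge b)\to h)\leq h$. Currying gives $(a\wedge b)\to h = a\to(b\to h)$. Meeting with $b$ then gives $b\wedge (a\to(b\to h))\leq a\to h$, because $a\wedge b\wedge(a\to(b\to h))\leq h$. Applying modus ponens with $j(a)$ then yields
\[
j(a)\wedge b\wedge((a\wedge b)\to h)\leq h,
\]
so $j(a)\wedge((a\wedge b)\to h)\leq b\to h$. Applying modus ponens once more with $j(b)=(b\to h)\to h$ gives the required bound $\leq h$. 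The main thing to get right is this chaining of currying and two modus ponens steps; everything else is routine.
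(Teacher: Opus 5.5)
Your proof is correct and takes the same route as the paper, which says only that the lemma follows by easy checks of the four nucleus conditions. You carry out those checks fully: antitonicity twice for monotonicity, modus ponens for the increasing property, the identity $((a\to h)\to h)\to h = a\to h$ for idempotence, and the chained modus ponens steps for $\wedge$-preservation.
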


We will later use this in the case that $h$ is the interpretation of an atom, $b$, so that this nucleus is $\lambda a. (a\to \db{b})\to \db{b}$.


\smallskip 

\begin{lemma}
    If $H$ is a complete Heyting algebra, and $(j_\alpha \mid \alpha\in A)$ is a family of nuclei on $H$, then $\bigwedge_\alpha j_\alpha$ is also a nucleus on $H$. \fillBox
\end{lemma}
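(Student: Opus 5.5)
Define $j = \bigwedge_\alpha j_\alpha$ pointwise, so $j(a) = \bigwedge_\alpha j_\alpha(a)$; this meet exists because $H$ is complete. I will check the four conditions of Definition~\ref{def:nucleus} one at a time. Each uses only the corresponding property of the $j_\alpha$ and elementary facts about arbitrary meets. I adopt the convention that for $A$ empty, $j$ is the constant map to $\top$; this is trivially a nucleus.

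Order-preservation is immediate. If $a\le b$, then $j_\alpha(a)\le j_\alpha(b)$ for each $\alpha$, and taking meets gives $j(a)\le j(b)$.

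The map is increasing for the same reason. Since $a\le j_\alpha(a)$ for every $\alpha$, $a$ is a lower bound of the family, so $a\le j(a)$.

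For $\wedge$-preservation, compute
\[
j(a\wedge b)=\bigwedge_\alpha (j_\alpha a\wedge j_\alpha b)=\Bigl(\bigwedge_\alpha j_\alpha a\Bigr)\wedge\Bigl(\bigwedge_\alpha j_\alpha b\Bigr)=j(a)\wedge j(b).
\]
The middle equality is associativity and commutativity of arbitrary meets.

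Idempotence is the only step needing thought, since $j\circ j$ is not a composite of the individual $j_\alpha$. The inequality $j(a)\le j(j(a))$ is inflationarity applied to $j(a)$. For the reverse inequality, fix $\beta$. Because $j(a)\le j_\beta(a)$ and $j_\beta$ is order-preserving and idempotent,
\[
j(j(a))\le j_\beta(j(a))\le j_\beta(j_\beta(a))=j_\beta(a).
\]
Since $\beta$ was arbitrary, $j(j(a))\le \bigwedge_\beta j_\beta(a)=j(a)$. This is the main point of the proof. It works by comparing $j$ with each $j_\beta$ separately, using monotonicity to pass through $j_\beta\circ j_\beta$.
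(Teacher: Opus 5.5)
Your proof is correct and takes the same approach as the paper, which dismisses this lemma as an ``easy check on the conditions for a nucleus.'' Your argument carries out those four checks explicitly, and the one step needing care, idempotence, is handled correctly by bounding $j(j(a))$ above by $j_\beta(j_\beta(a)) = j_\beta(a)$ for each $\beta$.
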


\smallskip 

If $\At$ is a collection of atoms, $H$ is a (complete) Heyting algebra, and $\sigma : \At\to H$ is a function giving the interpretation of atoms, then there is a canonical interpretation of intuitionistic propositional logic with atoms from $\At$ in $H$. We use $(-)_H$ to distinguish between propositional operators and the lattice operations in $H$: 
\[
\begin{array}{lr@{\quad}c@{\quad}l}
(\mbox{At}) & \db{a} & = & \sigma a \\ [1mm]
(\top)      & \db{\top} & = & \top_H \\ [1mm]
(\wedge)    & \db{\phi\wedge\psi} & = &
              \db{\phi}\wedge_H\db{\psi} \\ [1mm]
(\supset)   & \db{\phi\supset\psi} & = & 
              \db{\phi}\to_H\db{\psi} \\ [1mm]
(\bot)      & \db{\bot} & = & \bot_H \\ [1mm]
(\vee)      & \db{\phi\vee\psi} & = & 
              \db{\phi}\vee_H\db{\psi}
\end{array}
\]
In the case that $H=\worlds^\uparrow$, this corresponds to the standard Kripke interpretation. 

If $j$ is a nucleus on $H$, then, since $H_j$ is also complete Heyting algebra, there is a canonical interpretation on $H_j$ arising from $j\circ \sigma$. Since $H_j$ can be regarded as a subset of $H$, we can regard this as a non-standard interpretation in $H$:
\[
\begin{array}{l@{\quad}@{\quad}c}
(\mbox{At}) & \db{a}_j = (j \circ \sigma) a \\ [1mm]
(\top)      & \db{\top}_j = j(\top_H) = \top_H \\ [1mm]
(\wedge)    & \db{\phi\wedge\psi}_j =   
              j(\db{\phi}_j\wedge_H\db{\psi}_j) = \db{\phi}_j\wedge_H\db{\psi}_j \\ [1mm]
(\supset)   & \db{\phi\supset\psi}_j = j 
              (\db{\phi}_j\to_H\db{\psi}_j) = \db{\phi}_j\to_H\db{\psi}_j \\ [1mm]
(\bot)      & \db{\bot}_j = j (\bot_H) \\ [1mm]
(\vee)      & \db{\phi\vee\psi}_j = 
              j(\db{\phi}_j\vee_H\db{\psi}_j)
\end{array}
\]
Sandqvist's semantics arises from an instance of this form of interpretation. 

\section{Sandqvist's implicit CPS transform and completeness}\label{sec:Completeness2}

 We have shown, in Section~\ref{sec:Nuclei}, that for any atom $b$, the function $j_b (a) = (a\to \db{b})\to \db{b}$ is a nucleus on $\worlds^\uparrow$, 
and hence so is $J(a) = \bigwedge_{b\in\At} j_b(a) = \bigwedge_{b\in\At} (a\to \db{b})\to \db{b}$. We consider the interpretation of intuitionistic propositional logic in $\worlds^\uparrow_J$. 

First, the interpretation of atoms is unchanged. For any atom $b$, we have 
$\db{b} \leq J(\db{b}) = \bigwedge_{a\in\At} (\db{b}\to \db{a})\to \db{a} \leq (\db{b}\to \db{b})\to \db{b} = \db{b}$. Hence $J\db{b}=\db{b}$. Second, since $\worlds_J$ is closed in $\worlds$ under conjunction, the interpretation of conjunction is as in $\worlds$, and similarly for implication and true. 

\smallskip 

\begin{lemma}
Sandqvist's base-extension semantics for intuitionistic propositional logic arises as 
the interpretation in $\worlds^\uparrow$ arising from the nucleus $J(b) = \bigwedge_{a\in\At} (b \to \db{a}) \to \db{a}$.
\end{lemma}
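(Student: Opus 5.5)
We have to show that for every proposition $\phi$ and base $\base{B}\in\worlds$, $\support{B}{\phi}$ iff $\base{B}\in\db{\phi}_J$. Here $\db{-}_J$ is the nonstandard interpretation of Section~\ref{sec:Nuclei} with $\sigma(a)=\db{a}=\{\base{B}\mid\derives{B}{a}\}$. The argument is by induction on the structure of $\phi$. Along the way we record an auxiliary fact: for upward closed $U,V$ and $\base{B}$, we have $\base{B}\in U\to V$ iff every $\base{C}\supseteq\base{B}$ lying in $U$ lies in $V$. This is the standard description of the Heyting implication in $\worlds^\uparrow$. It lets us read each Heyting-algebra clause as a Kripke-style clause about extensions of bases.

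The atomic, $\top$, $\wedge$ and $\supset$ cases are immediate from the remarks just before the statement.
\begin{enumerate}
\item For atoms, $J\db{b}=\db{b}$, and this matches (At).
\item For conjunction, $\db{\phi\wedge\psi}_J=\db{\phi}_J\cap\db{\psi}_J$, and the induction hypothesis gives clause $(\wedge)$.
\item For implication, $\db{\phi\supset\psi}_J=\db{\phi}_J\to\db{\psi}_J$, since $H_J$ is closed under $\to$. By the auxiliary fact and the induction hypothesis, membership means: for all $\base{C}\supseteq\base{B}$, $\support{C}{\phi}$ implies $\support{C}{\psi}$. That is exactly (Inf) with $\Theta=\{\phi\}$, hence clause $(\supset)$.
\item Clause $(\top)$, if added, is trivial.
\end{enumerate}

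The substantive cases are $\bot$ and $\vee$, where $J$ actually acts. Unfold $J(U)=\bigcap_{a\in\At}\big((U\to\db{a})\to\db{a}\big)$ using the auxiliary fact twice. Then $\base{B}\in J(U)$ iff for every atom $a$ and every $\base{C}\supseteq\base{B}$, if $\base{C}\in U\to\db{a}$ then $\derives{C}{a}$.

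For $\vee$, take $U=\db{\phi}_J\cup\db{\psi}_J$. Since $(X\cup Y)\to Z=(X\to Z)\cap(Y\to Z)$, the condition $\base{C}\in U\to\db{a}$ becomes $\base{C}\in(\db{\phi}_J\to\db{a})\cap(\db{\psi}_J\to\db{a})$. By the auxiliary fact, the induction hypothesis and (Inf), this is exactly $\phi\support{C}{a}$ and $\psi\support{C}{a}$. This reproduces Sandqvist's clause $(\vee)$ verbatim.

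For $\bot$, take $U=\emptyset$. Then $\emptyset\to\db{a}=\worlds$, so $\base{B}\in J(\emptyset)$ iff for every atom $a$ and every $\base{C}\supseteq\base{B}$ we have $\derives{C}{a}$. By monotonicity (Lemma~\ref{lemm:derives-monotonic-in-base}) this is equivalent to $\derives{B}{a}$ for every atom $a$, which is clause $(\bot)$.

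The main obstacle is bookkeeping rather than depth. First, we must ensure the nested implications in $(U\to\db{a})\to\db{a}$ unfold correctly, with the double quantification over extensions collapsing via transitivity of $\subseteq$ and upward closure. Second, the range of atoms $a$ in $J$ must be the same $\At$ (or $\At^\ast$) quantified over in Sandqvist's clauses. I would state the auxiliary unfolding of $\to$ in $\worlds^\uparrow$ as a separate observation first, so that each inductive case is a one-line rewrite.
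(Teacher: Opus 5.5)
Your proposal is correct and follows the paper's proof essentially step for step. Both argue by induction over the clauses, with $J$ acting trivially on atoms, $\top$, $\wedge$ and $\supset$. The $\bot$ and $\vee$ cases are then handled by unfolding $J$ using $\bot\to\db{a}=\top$ (your $\emptyset\to\db{a}=\worlds$ plus monotonicity) and $(X\cup Y)\to Z=(X\to Z)\cap(Y\to Z)$. Your explicit Kripke-style reading of $\to$ in $\worlds^\uparrow$ just makes precise the translation from Heyting-algebra clauses to base-extension clauses that the paper leaves implicit.
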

\begin{proof}
    We run through the clauses for this interpretation showing their correspondence to the Sandqvist definition. Arguments make use of the soundness of intuitionistic reasoning in the standard interpretation in $\worlds^\uparrow$.

\smallskip 
    
\begin{itemize}[align=left]
    \item[($\At$)] $\db{a}_J = J \db{a}$: we have $\db{a} \leq J \db{a} = \bigwedge_{b\in\At} (\db{a}\to \db{b})\to \db{b} \leq (\db{a}\to \db{a}) \to \db{a} = \top\to \db{a} = \db{a}$. Hence $J \db{a} = \db{a}$, and the interpretation is standard, as in Sandqvist. \vspace{1mm} 
    \item[($\top$, $\wedge$, $\supset$)] All of these are interpreted as in a standard Kripke model in both the $J$-interpretation and in Sandqvist. \vspace{1mm}
    \item[($\bot$)] We have 
    \[
    \begin{array}{rcl} 
    \db{\bot}_J & = & J (\bot_H) \\ 
                & = & \bigwedge_{b\in\At} (\bot_H\to \db{b})\to \db{b} \\
                & = & \bigwedge_{b\in\At} (\top_H \to \db{b}) \\ 
                & = & \bigwedge_{b\in\At} \db{b}
    \end{array}
    \]
    So, $\base{B}\in\db{\bot}_J$ iff for every $b\in\At$, $\base{B}\in\db{b}$ iff for every $b\in\At$ $\derives{B}{b}$, as for Sandqvist. \vspace{1mm}
    \item[($\vee$)] We have 
    \[
    \begin{array}{rcl} 
    \db{\phi\vee\psi}_J & = & J(\db{\phi}_J\vee_H\db{\psi}_J) \\ 
    & = &\bigwedge_{b\in\At} ((\db{\phi}_J \vee \db{\psi}_J)\to \db{b})\to \db{b} \\ 
    & = & \bigwedge_{b\in\At} ((\db{\phi}_J\to \db{b})\wedge(\db{\psi}_J\to \db{b}))\to \db{b} 
    \end{array}
    \]
    So, using induction for $\phi$ and $\psi$, and $(\At)$ for $b$, $\base{B}\in\db{\phi\vee\psi}_J$ iff for every $b\in\At$, and for every $\base{C}\supseteq\base{B}$, if 
    $\support{C}{({\phi}\supset  {b})\wedge({\psi} \supset  {b})}$, then $\support{C}{{b}}$. 

    \medskip
    
    \noindent But then $\support{C}{({\phi}\supset  {b})\wedge({\psi} \supset {b})}$ iff 
    $\support{C}{{\phi} \supset {b}}$ and $\support{C}{{\psi} \supset {b}}$
    iff ${\phi}\support{C}{{b}}$ and ${\psi}\support{C}{{b}}$, which reduces this to Sandqvist's definition. 
\end{itemize} \vspace{-4mm}
\end{proof}

As a result, Sandqvist's interpretations of the propositional connectives, including $\bot$ and $\top$, can all be seen as coming from the same construction, and hence do not require different philosophical explanations. 

Moreover, we can now prove the equivalence of $\db{\Oval{$\phi$}}$ and 
$\db{\phi}_J$. If we compare this with Sandqvist's proof \cite{Sandqvist2025}, this amounts to demonstrating his condition $(\dagger)$:
\[ 
(\dagger) \mbox{  for every }\phi\mbox{ and every }
\base{B}\supseteq\base{N}\mbox{, }\support{\base{B}}
\mbox{\Oval{$\phi$}} \mbox{ iff }
\support{\base{B}}\phi 
\]
That condition is what fails if we use the original Heyting interpretation. The $J$-interpretation is required to make sure that it goes through. 

\smallskip 

\begin{proposition}\label{prop:prop-equiv-atom}
In $\worlds^\uparrow$, we have for all bases 
$\base{B}\supseteq\base{N}$, and for all propositions $\phi$, 
\[\base{B}\in\db{\Oval{$\phi$}}
= \db{\Oval{$\phi$}}_J \mbox{ iff }\base{B}\in \db{\phi}_J\]
\end{proposition}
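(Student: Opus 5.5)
We argue by structural induction on $\phi$, proving for every $\base{B}\supseteq\base{N}$ that $\base{B}\in\db{\Oval{$\phi$}}$ iff $\base{B}\in\db{\phi}_J$. The equality $\db{\Oval{$\phi$}} = \db{\Oval{$\phi$}}_J$ in the statement is immediate, because $\Oval{$\phi$}$ is an atom and $J$ fixes the interpretation of atoms, as shown in the case ($\At$) of the preceding lemma.

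Throughout, we reason in the standard Kripke interpretation on $\worlds^\uparrow$, restricted to bases extending $\base{N}$. This set of bases is upward closed, so the induction hypothesis is available at every extension. Two facts about $\base{N}$ do the work. First, each rule of $\base{N}$ is sound for derivability at every $\base{C}\supseteq\base{N}$, by Lemma~\ref{lemm:derives-monotonic-in-base}. Second, derivability with atomic hypotheses coincides with support, by Lemma~\ref{lem:kripke-eq-derives-atoms} and Lemma~\ref{lem:support-eq-derives-atoms}; this lets us discharge a hypothesis $\Oval{$\phi$}$ in an $\base{N}$-rule by means of an inference $\Oval{$\phi$}\kripke{C}{\cdots}$.

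The base case is trivial since $\Oval{$a$}=a$. The cases $\top$, $\wedge$ and $\supset$ follow the derivations displayed in Section~\ref{sec:Completeness1}. Since $\db{-}_J$ agrees with the Kripke clauses for these connectives, each direction is one $\base{N}$ introduction or elimination step combined with the induction hypothesis. For example, for $\supset$: from $\Oval{$\phi\supset\psi$}$ and, at an extension, $\db{\phi}_J$, the induction hypothesis gives $\Oval{$\phi$}$, $\supset$E in $\base{N}$ gives $\Oval{$\psi$}$, and the induction hypothesis again gives $\db{\psi}_J$. Conversely, assuming $\db{\phi}_J\to\db{\psi}_J$ at $\base{B}$, we obtain $\Oval{$\phi$}\derives{B}\Oval{$\psi$}$ by Lemma~\ref{lem:derives-deduction}, working in the base $\base{B}\cup\{(\Rightarrow\Oval{$\phi$})\}$, which still extends $\base{N}$. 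Then $\supset$I in $\base{N}$ gives $\Oval{$\phi\supset\psi$}$.

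For $\bot$: $\bot$E in $\base{N}$ yields $\Oval{$\bot$}\derives{B}\Oval{$\chi$}$ for all $\chi$, in particular for every atom $b\in\At$. Hence $\Oval{$\bot$}$ implies $\bigwedge_b\db{b}=\db{\bot}_J$. Conversely, $\db{\bot}_J$ gives $\derives{B}{b}$ for every atom, including the atom $\Oval{$\bot$}$ itself. (Here we assume, as in Sandqvist's setup, that the quantification over atoms ranges over $\At^\ast$.)

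The main obstacle is $\vee$, and this is exactly where $J$ is needed. For $\db{\phi\vee\psi}_J\Rightarrow\Oval{$\phi\vee\psi$}$, we instantiate the $J$-clause at the atom $b=\Oval{$\phi\vee\psi$}$. At any $\base{C}\supseteq\base{B}$, if $\db{\phi}_J$ holds then the induction hypothesis gives $\Oval{$\phi$}$, and $\vee$I in $\base{N}$ gives $\Oval{$\phi\vee\psi$}$; the same holds for $\psi$. Thus $(\db{\phi}_J\vee\db{\psi}_J)\to\db{b}$ holds at $\base{B}$, and therefore $\db{b}$ holds. This is precisely the step that failed for the plain Heyting interpretation: the continuation target is now an atom.

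For $\Oval{$\phi\vee\psi$}\Rightarrow\db{\phi\vee\psi}_J$, fix an atom $b$ and an extension $\base{C}$ at which $\db{\phi}_J\to\db{b}$ and $\db{\psi}_J\to\db{b}$ hold. By the induction hypothesis (at extensions of $\base{C}$), we get $\Oval{$\phi$}\kripke{C}{b}$ and $\Oval{$\psi$}\kripke{C}{b}$, hence $\Oval{$\phi$}\derives{C}{b}$ and $\Oval{$\psi$}\derives{C}{b}$ by Lemma~\ref{lem:kripke-eq-derives-atoms}. Now we need an $\vee$E rule with conclusion $b$. If $b$ is of the form $\Oval{$\chi$}$ for some formula $\chi$ covered by $\base{N}$, the rule $((\Rightarrow\Oval{$\phi\vee\psi$}),(\Oval{$\phi$}\Rightarrow b),(\Oval{$\psi$}\Rightarrow b)\Rightarrow b)$ is in $\base{N}$ and yields $\derives{C}{b}$. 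The delicate point is ensuring such rules exist for \emph{every} atom $b$. We would handle this by taking $\base{N}$ to include $\vee$E instances with arbitrary atomic conclusions, which corresponds to $\vee$E with an atomic conclusion in NJ; otherwise we would follow Sandqvist in restricting $\At$ appropriately. I expect this bookkeeping about which atoms $\base{N}$ covers to be the only genuinely subtle part of the proof.
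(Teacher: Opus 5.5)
Your proof is correct and follows the paper's argument case by case. It uses induction on $\phi$; the $\top$, $\wedge$, $\supset$ cases go via the derivations of Section~\ref{sec:Completeness1}, the $\bot$ case via the $\bot$E rules of $\base{N}$ with atomic conclusion, and the $\vee$ case via $\vee$E with the atomic target \Oval{$\phi\vee\psi$} in one direction and the $\base{N}$-rule for $\vee$E with an arbitrary atomic target $b$ in the other.

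You are somewhat more explicit than the paper, which leaves two assumptions unstated. It assumes that the atoms quantified over in $J$ include the translated atoms \Oval{$\chi$}, which is needed for $J$ to fix their interpretation and for the converse directions of the $\bot$ and $\vee$ cases. It also assumes that $\base{N}$ contains the $\bot$E and $\vee$E instances for every such atomic conclusion. You state both.
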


\begin{proof}
The proof is by induction on the structure of $\phi$. 

\smallskip 

\begin{itemize}[align=left]
    \item[(At)] For any atom $a$, $\mbox{\Oval{$a$}} = a$
    \item[($\wedge,\top,\supset$)] See the discussion in Section \ref{sec:Completeness1}.
    \item[($\bot$)] For any $b\in\At$, we have 
    \begin{prooftree}
        \hypo{\bot}
        \infer1{b}
    \end{prooftree}
    and hence the rule 
    $(\Rightarrow\mbox{\Oval{$\bot$}})\Rightarrow b)$ is in 
    $\base{N}$. It follows that if $\support{B}{\Oval{$\bot$}}$ then 
    $\support{B}{b}$ for any atom $b$. Conversely if $\support{B}{b}$ for any atom $b$, then in particular $\support{B}{\Oval{$\bot$}}$.
    \item[($\vee$)] Recall that we previously showed 
    $\phi\vee\psi \to \mbox{\Oval{$\phi\vee\psi$}}$ 
    in the original Kripke interpretation. The equivalent proof holds in the $J$-interpretation. 
    To show the reverse, it suffices to establish 
$\mbox{\Oval{$\phi\vee\psi$}} \to ((\phi\to b)\wedge (\psi \to b)) \to b$ for any atom $b$ in the $J$-interpretation:
\[
\hspace{-7mm}
\begin{prooftree}
    \hypo{\Oval{$\phi\vee\psi$}}
    \hypo{[\Oval{$\phi$}]}
    \infer1[Ind]{\phi}
    \hypo{[(\phi\to b)\wedge (\psi \to b)]}
    \infer1[$\wedge E$]{\phi\to b}
    \infer2[$\to E$]{b}
    \hypo{[\Oval{$\psi$}]}
    \infer1[Ind]{\psi}
    \hypo{[(\phi \to b)\wedge (\psi \to b)]}
    \infer1[$\wedge E$]{\psi\to b}
    \infer2[$\to E$]{b} 
    \infer3[$\base{N}$]{b}
    \infer1[$\to I$]{((\phi\to b)\wedge (\psi \to b))\to b}
\end{prooftree}
\]
This now uses the elimination rule for \Oval{$\phi\vee\psi$} with an atom $b$ as target, and hence is an instance of a rule in $\base{N}$. 
\end{itemize} \vspace{-4mm}
\end{proof}

We now have the ingredients to prove completeness. Given soundness of the interpretation, it is enough to prove the following. 

\smallskip 

\begin{proposition}\label{prop:completeness}
    If $\Phi\support{N}{\psi}$, then $\Phi\vdash\psi$ is provable in intuitionistic propositional logic. 
\end{proposition}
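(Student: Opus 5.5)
The plan is to reduce the semantic consequence $\Phi\support{N}{\psi}$ to an atomic derivability statement in $\base{N}$ and then invoke Proposition~\ref{prop:validity-reflection}. Write $\Phi=\{\phi_1,\dots,\phi_n\}$ and work over the extended atom set $\At^\ast$, with $\base{N}$ built from the subformulae of the sequent $\Phi\vdash\psi$. The target is
\[
\mbox{\Oval{$\phi_1$}},\dots,\mbox{\Oval{$\phi_n$}}\derives{N}{\mbox{\Oval{$\psi$}}},
\]
since Proposition~\ref{prop:validity-reflection} then yields $\Phi\vdash\psi$ in NJ.

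First I would use the lemma identifying Sandqvist's support with the $J$-interpretation in $\worlds^\uparrow$. This lets me read $\support{C}{\theta}$ as $\base{C}\in\db{\theta}_J$ for every formula $\theta$ and base $\base{C}$. Next I would convert the hypotheses to atoms using Lemma~\ref{lem:support-eq-derives-atoms}: it suffices to show $\mbox{\Oval{$\Phi$}}\support{N}{\mbox{\Oval{$\psi$}}}$. In the case $n=0$, use the remark after Definition~\ref{def:sandqvist-support} that empty-hypothesis inference coincides with support.

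To establish $\mbox{\Oval{$\Phi$}}\support{N}{\mbox{\Oval{$\psi$}}}$, unfold (Inf). Take any $\base{C}\supseteq\base{N}$ with $\support{C}{\mbox{\Oval{$\phi_i$}}}$ for each $i$. Proposition~\ref{prop:prop-equiv-atom}, which applies because $\base{C}\supseteq\base{N}$, gives $\base{C}\in\db{\phi_i}_J$, that is, $\support{C}{\phi_i}$. The hypothesis $\Phi\support{N}{\psi}$ then gives $\support{C}{\psi}$. Applying Proposition~\ref{prop:prop-equiv-atom} in the reverse direction gives $\support{C}{\mbox{\Oval{$\psi$}}}$, which completes this step. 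Chaining the steps gives the derivability statement displayed above, and Proposition~\ref{prop:validity-reflection} finishes the proof.

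The main obstacle is bookkeeping about the atom set. $\Phi\support{N}{\psi}$ quantifies over bases on $\At^\ast$, whereas the hypothesis is naturally read over $\At$, and the nucleus $J$ ranges over atoms $b$. In particular, the $\vee$ and $\bot$ cases of Proposition~\ref{prop:prop-equiv-atom} need $b$ to range over all of $\At^\ast$, so that the $\base{N}$ elimination rules target the same atoms that the support clauses quantify over. I would handle this by fixing $\At^\ast$ as the ambient atom set throughout, so that $\worlds$ consists of bases over $\At^\ast$ and $J$ is taken over $\At^\ast$. I would then check that Proposition~\ref{prop:validity-reflection} still holds for a base $\base{N}$ restricted to subformulae. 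That check relies on the subformula property, as the paper notes.
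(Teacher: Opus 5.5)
Your proposal is correct and follows the paper's proof step for step. The paper's Step 1 is your unfolding of (Inf) through Proposition~\ref{prop:prop-equiv-atom}, with the identification of support and the $J$-interpretation left implicit; Step 2 is Lemma~\ref{lem:support-eq-derives-atoms}; and Step 3 is Proposition~\ref{prop:validity-reflection}. Your extra bookkeeping is a sensible repair of a point the paper glosses over: taking $\At^\ast$ as the ambient atom set, so that $\base{N}$ contains $\vee$- and $\bot$-eliminations for every atomic target. Note, though, that the subformula property is only needed for the NJ-to-$\base{N}$ direction of Proposition~\ref{prop:validity-reflection}, not for the direction you use here.
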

\begin{proof}
    We proceed through three steps. We suppose 
    $\Phi\support{N}{\psi}$. 

    \smallskip 
    
    \begin{itemize}[align=left]
        \item[] Step 1: Using Proposition \ref{prop:prop-equiv-atom}, we translate this into a statement about atoms: 
        \[ 
            \{\mbox{\Oval{$\phi$}} \mid \phi\in\Phi \} \support{N}{\mbox{\Oval{$\psi$}}}
        \]
        \item[] Step 2: Using Lemma~\ref{lem:support-eq-derives-atoms}, we reduce this to atomic derivability: 
        \[
            \{\mbox{\Oval{$\phi$}} \mid \phi\in\Phi \} \derives{N}{\mbox{\Oval{$\psi$}}} 
        \] 
        \item[] Step 3: Finally, using Proposition \ref{prop:validity-reflection}, we conclude that 
        $\Phi\vdash\psi$ is provable in intuitionistic propositional logic. 
    \end{itemize} \vspace{-4mm}   
\end{proof}

Of the three steps in the proof above, only the first makes any use of the semantics of propositions, and so that is the critical one in understanding the difference between Sandqvist's semantics and the standard Kripke interpretation when it comes to completeness. 

\section{Proof-relevance} \label{sec:proof-relevance}

The arguments we have presented have all been couched in terms of derivability. But proof-theoretic semantics is concerned with the validity of arguments, and hence with a level of detail we have not covered, or at least have not addressed explicitly. 

The propositions-as-types (Curry-Howard)
correspondence gives a matching between propositions and types, proofs and terms, and reduction of proofs and evaluation of terms for a specific logic and a specific type theory. By choosing the logic and the type theory appropriately,  it can be made to seem almost vacuous at a technical level. The terms of the type theory become non-graphical representations of proof-trees. But there is more to it philosophically. The type theory gives ways to construct proofs, and these are then reflected into connectives and the proof rules for them. As a result, constructions of proofs are the direct reflections of basic operations at type-theoretic level. In the Kripke context, we end up with the standard Kripke interpretation as derived from the type theory of a category of presheaves. But Sandqvist's semantics is not that. 

We have seen that there are two ways to interpret a complete Heyting algebra $H_J$. We can look at it as an entity in its own right. This would imply that Sandqvist's model is a standard interpretation in a non-standard model. Specifically it is an interpretation in a model in which a form of completeness of the set of atoms is baked into the structure. Categorically, it would likely be the topos of sheaves for the Joyal-Tierney topology corresponding to $J$ \cite{MM1992}. We could interpret derivations and then proofs in that model. 

Alternatively, we can regard $H_J$ as a well-equipped subset of the normal world. In this context, we take propositions through an operation that makes them more complex. This is most evidently seen with atoms, where we map an atom $a$ to $\bigwedge_{b\in\At} (a\supset b)\supset b$, which, type-theoretically, would be
$\prod_{b\in\At} (a\to b)\to b$. Now there are obvious maps $a \longrightarrow (\prod_{b\in\At} (a\to b)\to b)$ and $(\prod_{b\in\At} (a\to b)\to b) \longrightarrow a$, but they are not inverses and we expect that $a$ is a retract of $\prod_{b\in\At} (a\to b)\to b$, but is not isomorphic to it. This means that we have to exercise care when we discuss the semantics of proofs. 

More delicately, we have seen that the CPS  transformation implicit in this non-standard semantics might impose a restriction on the admissible proof-reductions. We have not discussed this either. 

\subsection*{Acknowledgments} 

Pym is grateful to University College London for its
support of his sabbatical leave at the University of London’s Institute of Philosophy, and to the
Institute for its generous hosting. 

\bibliographystyle{siam}
\bibliography{references.bib}

\end{document}